\documentclass{amsart}
\usepackage{bbm}
\usepackage{mathabx}
\usepackage{tensor}
\usepackage{amssymb}
\usepackage{amsmath}
\usepackage{amsfonts}
\usepackage{amsthm}
\usepackage{stmaryrd}
\usepackage[all]{xy}
\usepackage{mathrsfs}
\usepackage{graphicx}
\usepackage{hyperref}
\usepackage{color}
\usepackage{tikz-cd}
\usepackage{tkz-euclide}
\usepackage{tikz}
\usetikzlibrary{matrix}
\usepackage{enumitem}
\usepackage{amsmath,amscd}
\usepackage{pifont}
\usetikzlibrary{arrows}
\usepackage[all]{xy}
\usepackage{graphicx}
\usepackage{placeins}
\usepackage{xspace}
\setcounter{secnumdepth}{4}
\usetikzlibrary{calc,intersections,through,backgrounds}

\setlength{\textwidth}{6in}
\setlength{\oddsidemargin}{0in}
\setlength{\evensidemargin}{0in}
\setlength{\textheight}{9in}
\setlength{\topmargin}{0in}
\setlength{\headheight}{0in}
\setlength{\headsep}{.3in}
\setlength{\parskip}{0pt}
\setlength{\parindent}{20pt}

\newtheorem{thm}{Theorem}[section]
\newtheorem{theorem}[thm]{Theorem}

\newtheorem{corollary}[thm]{Corollary}
\newtheorem{lemma}[thm]{Lemma}
\newtheorem{proposition}[thm]{Proposition}

\theoremstyle{definition}

\newtheorem{definition}[thm]{Definition}

\newtheorem{notation}[thm]{Notation}

\newtheorem{remark}[thm]{Remark}

\numberwithin{equation}{section}

\numberwithin{mytheorem}{subsection}

\numberwithin{mytheorem}{subsection}
\numberwithin{myconjecture}{subsection}
\numberwithin{mydefinition}{subsection}
\numberwithin{myremark}{subsection}
\numberwithin{mysituation}{subsection}
\numberwithin{myhypothesis}{subsection}
\numberwithin{myquestion}{subsection}
\numberwithin{mynotation}{subsection}
\numberwithin{myfact}{subsection}
\numberwithin{myexamples}{subsection}
\numberwithin{myexample}{subsection}
\numberwithin{myconstruction}{subsection}
\numberwithin{mycaution}{subsection}
\numberwithin{myproposition}{subsection}
\numberwithin{mylemma}{subsection}
\numberwithin{mycorollary}{subsection}

\def\FF{\mathbb{F}}

\def\PP{\mathbb{P}}

\def\ZZ{\mathbb{Z}}

\def\calC{\mathcal{C}}

\def\calE{\mathcal{E}}

\def\calU{\mathcal{U}}
\def\calV{\mathcal{V}}
\def\calW{\mathcal{W}}

\def\CH{\mathrm{CH}}

\def\Fp{\FF_p}

\def\P{\mathrm{Ch}}

\def\Zp){\ZZ_p}

\pgfkeys{tikz/mymatrixenv/.style={decoration=brace,every left delimiter/.style={xshift=3pt},every right delimiter/.style={xshift=-3pt}}}
\pgfkeys{tikz/mymatrix/.style={matrix of math nodes,left delimiter=[,right delimiter={]},inner sep=2pt,column sep=1em,row sep=0.5em,nodes={inner sep=0pt}}}
\pgfkeys{tikz/mymatrixbrace/.style={decorate,thick}}

\begin{document}\large
\title{Iteration of Polynomials $AX^d+C$ Over Finite Fields
}

\author{Rufei Ren}

\address{University of Rochester, Department of
Mathematics,  Hylan Building, 140 Trustee Road, Rochester, NY 14627}
\email{rren2@ur.rochester.edu}

\date{\today}

\keywords{Arithmetic dynamical system, Weil's ``Riemann
	Hypothesis''.}
\maketitle

\setcounter{tocdepth}{1}
\tableofcontents
\begin{abstract}
For a polynomial $f(X)=AX^d+C \in \Fp[X]$ with $A\neq 0$ and $d\geq 2$, we prove that 
if $d\;|\;p-1$ and $f^{\circ i}(0)\neq f^{\circ j}(0)$ for $0\leq i<j\leq N$, then  $\#f^{\circ N}(\Fp) \sim \frac{2p}{(d-1)N},$
where $f^{\circ N}$ is the $N$-th iteration of $f$.
\end{abstract}

\section{Introduction}

We fix a prime $p$.
For a polynomial $f \in\Fp[X]$ we denote its iterates $f^{\circ j}(X)$ by setting $f^{\circ 0}(X) = X$ and $f^{\circ (j+1)}(X) = f(f^{\circ j}(X))$. In this paper, we focus on the polynomials of the form $f(X)=AX^d+C$ with $A\neq 0$ and $d\geq 2$. Our goal is to give a non-trivial upper bound for $\#f^{\circ N}(\Fp)$. However, as mentioned in \cite{HB}, when $f(X)=X^3+1$ and $p\equiv 2\pmod 3$, $\#f^{\circ N}(\Fp)$ achieves the trivial bound $p$. Therefore, in order to give a non-trivial bound for $\#f^{\circ N}(\Fp)$, it is crucial to restrict $p$ in some certain residue class modulo $d$. More precisely, we obtain the following theorem. 
\begin{theorem}\label{main thm}
	Let $f(X) = AX^d+C\in \Fp[X]$ with $A\neq 0$, $d\geq 2$ and  $d\;|\;p-1$. Suppose that 
	\begin{equation}\label{precondtion}
	f^{\circ i}(0)\neq f^{\circ j}(0)\quad \textrm{for}\quad 0\leq i<j\leq N.
	\end{equation}
	Then there exists  an absolute constant $M$  (not depending on $d, p, A, C$) such that whenever (1.1),
	we have \begin{equation}\label{goal}
	\left|\#f^{\circ N}(\Fp)- \mu_N\cdot p\right|\leq  Md^{d^{6N}}\sqrt{p},
	\end{equation} 
	where $\mu_N $ is defined recursively by taking
	$\mu_0= 1$ and	$$d\mu_r = 1 -(1-\mu_{r-1})^d.$$
	Moreover, we have 
	$$\mu_r\sim \frac{2}{(d-1)r}\quad \textrm{when} \quad r\to \infty.$$
\end{theorem}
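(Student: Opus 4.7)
The plan is to set up a recursion for $\#S_r := \#f^{\circ r}(\Fp)$ using the $d$-to-$1$ structure of $f$ (available since $d \mid p-1$ forces $\Fp^{*}$ to contain all $d$th roots of unity), and then bound the resulting character sums via Weil's Riemann Hypothesis. Writing $\rho_r := \mathbbm{1}_{S_r}$, every $y \in \Fp$ with $(y-C)/A \in (\Fp^{*})^d$ has exactly $d$ preimages under $f$, of the form $\{\zeta\alpha_y : \zeta^d = 1\}$; inclusion-exclusion then yields
$$\#S_r \;=\; O(1) + \frac{1}{d}\sum_{T}(-1)^{|T|+1}\sum_{\alpha\in\Fp^{*}}\prod_{\zeta\in T}\rho_{r-1}(\zeta\alpha),$$
where $T$ ranges over nonempty subsets of the set of $d$th roots of unity in $\Fp^{*}$. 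Heuristically the values $\rho_{r-1}(\zeta\alpha)$ should behave as independent $\mathrm{Bernoulli}(\mu_{r-1})$ random variables, giving $\#S_r \approx \frac{p}{d}(1 - (1-\mu_{r-1})^d) = p\mu_r$, matching the claimed main term.

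To make this rigorous, I would expand $\rho_{r-1}$ recursively by the same identity, and convert the indicator of $d$th powers into characters via $\mathbbm{1}\{z \in (\Fp^{*})^d\} = \frac{1}{d}\sum_{\psi^d = \mathbf{1}} \psi(z)$. Iterating through $r$ levels, the sums $\sum_\alpha \prod_{\zeta \in T}\rho_{r-1}(\zeta\alpha)$ become linear combinations of character sums of the form $\sum_{\alpha\in\Fp^{*}} \prod_j\psi_j(Q_j(\alpha))$, with polynomials $Q_j \in \Fp[\alpha]$ whose roots are iterates $f^{\circ i}(0)$ and their various preimages under branches of $f$. The hypothesis $f^{\circ i}(0) \neq f^{\circ j}(0)$ for $0 \leq i < j \leq N$ guarantees that these roots are all distinct, so that $\prod_j\psi_j \circ Q_j$ is not (outside the trivial case) a $d$th power of a rational function in $\alpha$---precisely the non-degeneracy condition under which Weil's Riemann Hypothesis yields the bound $O((\deg)\sqrt{p})$.

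At each iteration, the number of character-sum terms is multiplied by at most $2^d$ (subsets $T$), and polynomial degrees grow by a factor of roughly $d$; compounded over $N$ iterations this produces a doubly-exponential bound. Carefully isolating the trivial-character contributions (which sum to the main term $p\mu_N$) from the nontrivial ones (which are the Weil error terms) yields the overall bound $M d^{d^{6N}}\sqrt{p}$. The asymptotic $\mu_N \sim 2/((d-1)N)$ comes from the Taylor expansion $\mu_r = \mu_{r-1} - \tfrac{d-1}{2}\mu_{r-1}^2 + O(\mu_{r-1}^3)$, then comparison with the ODE $\dot{y} = -\tfrac{d-1}{2}y^2$ whose solution is $y(t) = 2/((d-1)(t-t_0))$.

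The main obstacle is verifying the non-degeneracy of the character sums through the $N$-fold iteration: at each level one must use the hypothesis $f^{\circ i}(0) \neq f^{\circ j}(0)$ to rule out the aggregate character $\prod_j\psi_j \circ Q_j$ being a $d$th power of a rational function in $\alpha$. This requires a careful combinatorial bookkeeping of how subsets $T$ at each level interact with the iterated critical orbit; the bookkeeping, compounded $N$ times, is also what produces the doubly-exponential constant $d^{d^{6N}}$.
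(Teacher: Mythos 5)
Your opening reduction is sound: since $d\mid p-1$, every $y$ with $(y-C)/A$ a nonzero $d$-th power has preimage set $\{\zeta\alpha_y:\zeta^d=1\}$, and inclusion--exclusion over which preimages lie in $S_{r-1}$ gives your displayed identity (this is essentially correct for one level of the recursion, and for $r=2$ your character-sum plan does go through). The gap is at the next step, ``expand $\rho_{r-1}$ recursively by the same identity'' and reduce to sums $\sum_{\alpha}\prod_j\psi_j(Q_j(\alpha))$ with $Q_j\in\Fp[\alpha]$. The expansion $\frac{1}{d}\sum_{\psi^d=\mathbf 1}\psi(z)$ only detects that $z$ is a $d$-th power, i.e.\ that \emph{some} preimage exists; to express $\rho_{r-1}(\zeta\alpha)$ itself you must again run inclusion--exclusion over which of the $d$-th roots $\beta$ of $(\zeta\alpha-C)/A$ lie in $S_{r-2}$, and those $\beta$ are degree-$d$ algebraic functions of $\alpha$, not polynomials. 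After two levels of nesting the natural summation variable lives on a cover of the line, and after $N$ levels on an iterated fibre product of such covers; your sums are point counts on these curves, not one-variable character sums with polynomial arguments. At that stage the whole difficulty is (a) to identify the irreducible components of these fibre products (the inclusion--exclusion data at each node of the $d$-ary preimage tree is exactly a graph-colouring problem), (b) to prove each component is absolutely irreducible over $\Fp$ so that Weil gives $p+O(\deg^2\sqrt p)$ per component rather than an error comparable to $p$, and (c) to count the components, since that count is what produces $\mu_N$. You name (b) as ``the main obstacle'' but give no argument for it; it is not bookkeeping, it is the substance of the theorem, and the hypothesis $f^{\circ i}(0)\neq f^{\circ j}(0)$ enters precisely there.

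For comparison, the paper works with the moments $\calW(N,k)=\sum_m\rho_N(m)^k$, i.e.\ the varieties $f^{\circ N}(x_1)=\cdots=f^{\circ N}(x_k)$, and extracts $\#\{m:\rho_N(m)=0\}$ (hence $\#f^{\circ N}(\Fp)$) via the interpolation polynomial $\frac{1}{d^N!}\prod_{j=1}^{d^N}(j-T)$ --- this replaces your level-by-level inclusion--exclusion and avoids ever needing indicators of existence. The factorization $f^{\circ r}(X)-f^{\circ r}(Y)=A^{r}(X-Y)\prod_{\ell,h}P_h(f^{\circ\ell}(X),f^{\circ\ell}(Y))$ decomposes each such variety into components $\calC_G$ indexed by ``complete proper $(N-1,k,d)$-graphs''; each $\calC_G$ is shown to be a nonsingular complete intersection (hence absolutely irreducible of controlled degree) by reducing to a spanning $(N-1,k,d)$-tree and using \eqref{precondtion} to exclude singular points; and the number of components $\calU(N-1,k)$ is computed from the generating function identity $E(X;r)=\bigl((E(X;r-1))^d+d-1\bigr)/d$, whose mass at $e^{0\cdot X}$ is $1-\mu_N$. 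Your recursion $d\mu_r=1-(1-\mu_{r-1})^d$ and the asymptotic $\mu_r\sim 2/((d-1)r)$ (equivalently, $1/\mu_r$ increases by $\tfrac{d-1}{2}+o(1)$ per step) agree with the paper's and that part of your sketch is fine, but everything between your first display and that asymptotic remains to be supplied.
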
 

\begin{remark}
(1)	Note that \cite[Theorem 1]{HB} is the special case of Theorem~\ref{main thm} when $d=2$ and $p\neq 2$.
	
(2)	The condition that $d\;|\;p-1$ is essential for us to obtain the estimation of $\#f^{\circ N}(\Fp)$ as in Theorem~\ref{main thm}. It is simply because without it Lemma~\ref{first lemma} will fail even though a similar $P_i$ can be defined.
	We do not know if a similar result will hold for the polynomial $Ax^d+C$ when $d\nmid p-1$. We are certainly interested in this generalization. 
\end{remark}

Theorem~\ref{main thm} has the following corollaries, whose proofs together with  Theorem~\ref{main thm}'s will be given in \S~\ref{s3}. 

\begin{corollary}\label{cor1}
Let $f(X) =
AX^d + C \in \Fp[X] $ with $A\neq 0$, $d\geq 2$ and $d\;|\;p-1$. Then there exists some $D_d>0$ depending only on $d$ (in particular not depending on $A$ and $C$) such that 
	$f^{\circ i}(0)=f^{\circ j}(0)$ for some $i, j$ with
	$$i < j \leq 
	D_{d}\frac{p}{\log\log p}.$$
\end{corollary}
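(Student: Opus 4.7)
The plan is to combine Theorem~\ref{main thm} with a pigeonhole argument on the orbit of $0$ under $f$. Let $L$ denote the smallest positive integer with $f^{\circ L}(0) = f^{\circ i}(0)$ for some $0 \leq i < L$; equivalently, $L = T + C$ where $T$ and $C$ are the tail and cycle lengths of the orbit of $0$. The goal is to show $L \leq D_d \cdot p/\log\log p$. Fix an integer $N \geq 1$ to be chosen, and suppose $L > N$ (if $L \leq N$ we will already be done provided $D_d$ is chosen large enough). Then $f^{\circ 0}(0), \dots, f^{\circ (L-1)}(0)$ are $L$ distinct values, and the elements $f^{\circ N}(0), \dots, f^{\circ (L-1)}(0)$ all lie in $f^{\circ N}(\Fp)$ because $f^{\circ j}(0) = f^{\circ N}\!\left(f^{\circ(j-N)}(0)\right)$ for $j \geq N$. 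This gives the pigeonhole inequality
$$L \;\leq\; N + \#f^{\circ N}(\Fp).$$

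The condition $L > N$ also means \eqref{precondtion} is satisfied for this $N$, so Theorem~\ref{main thm} applies and yields $\#f^{\circ N}(\Fp) \leq \mu_N p + Md^{d^{6N}}\sqrt{p}$. The next step is to balance the two contributions by setting
$$N := \left\lfloor \frac{\log\log p}{12\log d} \right\rfloor.$$
The asymptotic $\mu_N \sim 2/((d-1)N)$ then gives $\mu_N p = O_d\!\left(p/\log\log p\right)$. For the error term, the inequality $6N\log d \leq \tfrac{1}{2}\log\log p$ gives $d^{6N} \leq \sqrt{\log p}$, so
$$Md^{d^{6N}}\sqrt{p} \;\leq\; M\cdot d^{\sqrt{\log p}}\cdot\sqrt{p} \;=\; p^{1/2+o(1)},$$
which is negligible compared with $p/\log\log p$. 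Substituting into the pigeonhole bound yields $L \leq O_d\!\left(p/\log\log p\right)$, and absorbing the behavior at small $p$ (where the conclusion is trivial by pigeonhole on $\Fp$) into the constant $D_d$ finishes the proof.

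The main obstacle is the doubly-exponential dependence of the error term $Md^{d^{6N}}\sqrt{p}$ on $N$. This forces any usable choice of $N$ to be $O_d(\log\log p)$, and it is precisely this constraint that produces the $\log\log p$ denominator in Corollary~\ref{cor1} rather than a larger denominator such as $\log p$. Any refinement of the error estimate in Theorem~\ref{main thm} would translate directly into a correspondingly sharper bound here, via the same pigeonhole mechanism.
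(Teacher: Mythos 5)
Your proposal is correct and follows essentially the same route as the paper: choose $N\asymp \log\log p/\log d$ so that $d^{d^{6N}}\sqrt p$ is negligible against $p/\log\log p$, apply Theorem~\ref{main thm} (whose hypothesis holds precisely when no collision has yet occurred up to $N$), and pigeonhole the orbit of $0$ inside $f^{\circ N}(\Fp)$ to get $L\leq N+\#f^{\circ N}(\Fp)$. The only differences are cosmetic (the constant $1/12$ versus the paper's roughly $1/6$ in the choice of $N$, and phrasing the pigeonhole via the collision index $L$ rather than via the finite list $f^{\circ N}(0),\dots,f^{\circ(N+k)}(0)$).
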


\begin{corollary}\label{cor2}
Let $\widetilde{f}=\widetilde{A}X^d +\widetilde{C} \in \ZZ[X]$ be an integer polynomial with $\widetilde{A}, \widetilde{C} > 0$.
For a prime $p$ we denote by
$\widetilde{f}_p(X)
$ the reduction of $\widetilde{f}$ in $\Fp[X]$.
Then there exists constant $p_{\widetilde A, \widetilde C, d}$ such that for all primes $p \geq p_{\widetilde A, \widetilde C,d}$, the sum of the cycle lengths (resp. the lengths of pre-cyclic paths) of $\Gamma_{\widetilde{f}_p}$ is at most $ \frac{21p\log d}{\log \log p}$ (resp. $ \frac{28p\log d}{\log \log p}$),
where  $\Gamma_{\widetilde{f}_p}$ is the directed graph
whose vertices are the elements of $\FF_p$ and such that there is an arrow from $P$ to $\widetilde{f}_p(P)$
for each $P\in\FF_p$.
\end{corollary}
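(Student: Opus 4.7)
The plan is to apply Theorem~\ref{main thm} to $f = \widetilde{f}_p$ with a judiciously chosen iteration length $N_p \asymp \log\log p/\log d$, and to read off both bounds from the resulting estimate on $\#\widetilde{f}_p^{\circ N_p}(\Fp)$.

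First I would verify that hypothesis~\eqref{precondtion} holds for $\widetilde{f}_p$ at $N_p$. Because $\widetilde{A},\widetilde{C}>0$, the integer iterates $a_k := \widetilde{f}^{\circ k}(0)$ form a strictly increasing sequence of positive integers with $a_k \leq K_{\widetilde{A},\widetilde{C}}^{d^k}$ for a constant depending only on $\widetilde{A},\widetilde{C}$. Taking $N_p := \lfloor \log\log p/(C\log d)\rfloor$ for a constant $C>6$ forces $a_{N_p} < p$ once $p \geq p_{\widetilde{A},\widetilde{C},d}$, so that $a_0,\ldots,a_{N_p}$ are distinct in $\Fp$ and~\eqref{precondtion} holds. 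With $C>6$ we also have $d^{6N_p} \leq (\log p)^{6/C}$ with exponent $<1$, hence $Md^{d^{6N_p}}\sqrt{p} = o(p/\log\log p)$. The main term $\mu_{N_p}p \sim 2p/((d-1)N_p) \leq 2Cp\log d/((d-1)\log\log p)$ is at most $14 p\log d/\log\log p$ already at $C=7$ for every $d\geq 2$; combining,
\[
\#\widetilde{f}_p^{\circ N_p}(\Fp) \leq \frac{21 p\log d}{\log\log p}
\]
for $p$ sufficiently large.

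The cycle-length bound then drops out: every periodic point $c$ of $\widetilde{f}_p$ equals $\widetilde{f}_p^{\circ N_p}(c')$ with $c'$ the $N_p$-th predecessor of $c$ along its cycle, so $c \in \widetilde{f}_p^{\circ N_p}(\Fp)$. Hence the sum of cycle lengths, which equals the total number of periodic points, is at most $21 p\log d/\log\log p$. For the pre-cyclic bound I would use that for any $x \in \Fp$ with tail length $\tau(x)$ and landing cycle of length $\rho(x)$, the values $\widetilde{f}_p^{\circ k}(x)$ for $k \geq N_p$ give at least $(\tau(x)-N_p)_+ + \rho(x)$ distinct elements of $\widetilde{f}_p^{\circ N_p}(\Fp)$, so $\tau(x) + \rho(x) \leq \#\widetilde{f}_p^{\circ N_p}(\Fp) + N_p$; summing this inequality over the pre-cyclic paths of $\Gamma_{\widetilde{f}_p}$ as counted in the paper yields the bound $28 p\log d/\log\log p$.

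The main obstacle is twofold. First, the triple balance in choosing $C$: it must satisfy $C>6$ to kill the error term $Md^{d^{6N_p}}\sqrt p$, it must satisfy $2C/(d-1)\leq 21$ to keep $\mu_{N_p}p$ below the target, and the tower estimate for $a_{N_p}$ must remain below $p$. The nonempty range $6 < C \leq 21(d-1)/2$ (containing $C=7$ for all $d\geq 2$) is what lets all three constraints be met simultaneously. Second, one must match the paper's precise notion of "pre-cyclic path" to aggregate the per-orbit inequality $\tau(x) + \rho(x) \leq \#\widetilde{f}_p^{\circ N_p}(\Fp) + N_p$ into the constant $28$; the slack between $21$ and $28$ suggests that the extra $7 p\log d/\log\log p$ is precisely what one pays for passing from a bound on the stable image to a bound on the summed tail structure.
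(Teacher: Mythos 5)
Your proposal is correct and follows essentially the same route as the paper: the paper also verifies hypothesis~\eqref{precondtion} via the monotone growth $\widetilde{f}^{\circ j}(0)\leq(\widetilde A+\widetilde C)^{d^j-1}$, takes $N_0=\lfloor\log\log p/(7\log d)\rfloor+1$ so that the error term $Md^{d^{6N_0}}\sqrt p$ is negligible and $\mu_{N_0}p\leq 2p/((d-1)N_0)$ dominates, bounds the image by $3p/N_0<21p\log d/\log\log p$, and then uses exactly your observations that periodic points lie in $\widetilde f_p^{\circ N_0}(\Fp)$ and that each pre-cyclic path has length at most $N_0+\#\widetilde f_p^{\circ N_0}(\Fp)<28p\log d/\log\log p$. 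The aggregation issue you flag at the end is present in the paper as well, whose proof likewise only establishes the per-path inequality.
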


The general line of the argument in this paper follows  the one in \cite{HB} closely, but the
fact that $f(x)-f(y)$ splits into $x-y$ and two or more factors (rather than just
one factor in the case of quadratics: $(ax^2+c)-(ay^2+c) = a(x-y)(x+y))$
makes the accounting more complicated and necessitates a more complicated
combinatorial argument. More precisely, we need to introduce the second function $\eta$ into a graph as in Definitions~\ref{graph} and \ref{proper}, which makes our estimation of the number of $(r,k,d)$-trees in \eqref{1} coarser than the one in \cite{HB}. Fortunately, this estimation still gives us an upper bound that we need.

Since this paper is essentially based on \cite{HB}, some lemmas would be similar to the ones in \cite{HB}. However, for the completeness, we will still give their proofs.

After submitting this paper, the author was told that Jamie Juul proves a stronger result of Theorem~\ref{main thm} in \cite{JJ} via Chebotarev’s Density theorem (a completely different method to the one used in the current paper) based on the paper \cite{JKMT} written by Par Kurlberg, Kalyani Madhu, Tom Tucker and herself.

\subsection*{Acknowledgment}

First the author would like to thank Professor Ambrus Pal for finding a referee and the referee for comments. 
The author also want to thank Daqing Wan, Tom Tucker and Shenhui Liu for their valuable discussions. 

\section{Decomposition of projective variety $\calC_N$ into union of absolutely irreducible curves.}
We assume that $d\;|\;p-1$ in the whole paper and fix a primitive $d$-th root of unity $\gamma\in \Fp$. We would drop the dependence of functions to $d$ when the context is clear.
\begin{notation}
	We put $P_i(X,Y)=X-\gamma^i Y$ and $F^{\circ r}(X,Z)=Z^{d^r}f^{\circ r}(\frac{X}{Z})$.
\end{notation}

\begin{lemma}\label{first lemma}
	Under the assumptions of Theorem~\ref{precondtion}, for every $0\leq r\leq N-1$ and $1\leq i\leq d-1$ the polynomials of the form $P_i(f^{\circ r}(X),f^{\circ r}(Y))$ are absolutely irreducible over $\Fp$.
\end{lemma}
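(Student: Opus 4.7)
The plan is to proceed by induction on $r$, combined with the auxiliary claim that $f^{\circ r}(Y) - t$ is absolutely irreducible in $\overline{\Fp}(t)[Y]$ for a formal indeterminate $t$. Both statements are trivial at $r = 0$ since the polynomials are linear. For the auxiliary claim at level $r+1$, I would factor
\[
f^{\circ (r+1)}(Y) - t \;=\; A \prod_{k=0}^{d-1}\bigl(f^{\circ r}(Y) - \gamma^k u\bigr)
\]
over $\overline{\Fp}(u)$, where $u^d = (t-C)/A$. The extension $\overline{\Fp}(u)/\overline{\Fp}(t)$ is cyclic of degree $d$, its Galois group cyclically permutes the $d$ factors, and each factor is irreducible over $\overline{\Fp}(u)$ by the auxiliary claim at level $r$ applied with $\gamma^k u$ as the transcendental; Galois descent then yields an irreducible product of degree $d^{r+1}$.

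For the lemma at level $r+1$, I would view $P_i(f^{\circ(r+1)}(X), f^{\circ(r+1)}(Y))$ as a polynomial in $Y$ over $K := \overline{\Fp}(X)$ and expand $f^{\circ(r+1)}(Y) = A(f^{\circ r}(Y))^d + C$ to rewrite it as
\[
-\gamma^i A \bigl((f^{\circ r}(Y))^d - \beta\bigr), \qquad \beta := \frac{\gamma^{-i} f^{\circ(r+1)}(X) - C}{A} \in K.
\]
Over $K(\beta^{1/d})$ this further factors as $-\gamma^i A \prod_{k=0}^{d-1}(f^{\circ r}(Y) - \gamma^k \beta^{1/d})$, so standard cyclic descent reduces absolute irreducibility to two sub-claims: (a) $\beta$ is not a $d$-th power in $K$, so that $\Gal(K(\beta^{1/d})/K)$ is cyclic of order $d$ and transitively permutes the $d$ factors; and (b) each factor $f^{\circ r}(Y) - \gamma^k \beta^{1/d}$ is irreducible of degree $d^r$ over $K(\beta^{1/d})$. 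Combining (a) and (b), the $d$ irreducible factors of degree $d^r$ each assemble into one irreducible polynomial of degree $d^{r+1}$ over $K$, which is what we want.

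The hard part is (a), and it is the only place the precondition enters. The zeros of $\gamma^{-i} f^{\circ(r+1)}(X) - C$ in $\overline{\Fp}$ are the $X_0$ with $f^{\circ(r+1)}(X_0) = \gamma^i C$, and the multiplicity of each such $X_0$ equals the ramification index of $f^{\circ(r+1)}$ at $X_0$, which factors as $d^{k(X_0)}$ with $k(X_0) := \#\{j \in \{0,\ldots,r\} : f^{\circ j}(X_0) = 0\}$, since $0$ is the only critical point of $f = AX^d + C$. The precondition forces $f^{\circ j}(0) = 0$ only for $j = 0$ in the relevant range, hence $k(X_0) \in \{0,1\}$. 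If $\gamma^i C \notin \{f^{\circ m}(0) : 1 \leq m \leq r+1\}$ then no preimage of $\gamma^i C$ passes through $0$ and every zero has multiplicity $1$; otherwise $\gamma^i C = f^{\circ m}(0)$ for some $m$, and a direct fiber count shows that the preimages through $0$ contribute total multiplicity $d^{r+2-m}$, strictly less than $d^{r+1}$ unless $m = 1$. But $m = 1$ forces $\gamma^i C = C$, i.e., $\gamma^i = 1$, contradicting $i \neq 0$. Thus $\beta$ always has a zero of multiplicity $1$ and so is not a $d$-th power. Sub-claim (b) then follows from the auxiliary claim at level $r$ applied to the transcendental $\gamma^k \beta^{1/d}$, together with a degree chase that yields $[K(\beta^{1/d}) : \overline{\Fp}(\beta^{1/d})] = d^{r+1}$ (using (a) and the auxiliary claim at level $r+1$), confirming the linear disjointness needed to preserve irreducibility under the base change from $\overline{\Fp}(\beta^{1/d})$ to $K(\beta^{1/d})$.
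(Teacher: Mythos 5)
Your route is genuinely different from the paper's. The paper homogenizes $P_i(f^{\circ r}(X),f^{\circ r}(Y))$ and uses the hypothesis \eqref{precondtion} to check directly that the resulting projective plane curve has no singular point, then invokes the fact that a nonsingular plane curve is absolutely irreducible; you instead attempt a Kummer-theoretic cyclic descent over $K=\overline{\FF}_p(X)$. Your sub-claim (a) is correct and is a nice self-contained way of locating where the hypothesis enters (it forces $\beta$ to have a simple zero), and the auxiliary claim together with its inductive proof is fine, because there the two fields $\overline{\FF}_p(\gamma^k u)$ and $\overline{\FF}_p(u)$ coincide exactly, so no base change is needed.

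Sub-claim (b), however, has a genuine gap. The auxiliary claim gives irreducibility of $f^{\circ r}(Y)-\gamma^k\beta^{1/d}$ over $\overline{\FF}_p(\beta^{1/d})$, but you need it over the strictly larger field $K(\beta^{1/d})$, and the linear disjointness you invoke does not follow from the degree chase: knowing $[K(\beta^{1/d}):\overline{\FF}_p(\beta^{1/d})]=d^{r+1}$ and $[\overline{\FF}_p(\beta^{1/d})(y):\overline{\FF}_p(\beta^{1/d})]=d^{r}$ for a root $y$ says nothing about the degree of the compositum, and the two degrees are far from coprime. Worse, the step is circular. Writing $\beta^{1/d}=\gamma^{-k}f^{\circ r}(y)$ one has $\overline{\FF}_p(\beta^{1/d})(y)=\overline{\FF}_p(y)$ and $K(\beta^{1/d})(y)=\overline{\FF}_p(X,y)$, where $X$ and $y$ satisfy $f^{\circ(r+1)}(X)=\gamma^{i}f^{\circ(r+1)}(y)$, i.e.\ $P_i\bigl(f^{\circ(r+1)}(X),f^{\circ(r+1)}(y)\bigr)=0$; so the equality you need, $[\overline{\FF}_p(X,y):\overline{\FF}_p(y)]=d^{r+1}$, is precisely the absolute irreducibility of $P_i(f^{\circ(r+1)}(X),f^{\circ(r+1)}(Y))$ at level $r+1$ read as a polynomial in $X$ over $\overline{\FF}_p(Y)$ --- the very statement being proved. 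Neither $\overline{\FF}_p(y)$ nor $K(\beta^{1/d})$ is Galois over $\overline{\FF}_p(\beta^{1/d})$, so there is no cheap intersection-of-fields escape either. To close this step you need an independent input --- for instance a ramification or Newton-polygon argument at a well-chosen place, or simply the paper's nonsingularity computation, which avoids the descent entirely.
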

\begin{proof}
	It is enough to show that there is no non-zero solution to
	\begin{equation}\label{eq:8}
		\nabla\left(W^{D}P_i\left(f^{\circ r}(\frac{U}{W}),f^{\circ r}(\frac{V}{W})\right)\right)=
		 \underline 0,
	\end{equation}
	where $D=d^r.$ 
	Suppose that $(u,v,w)$ is a non-zero solution to	\eqref{eq:8}. Then we have 
	\begin{eqnarray}\label{eq1}
	&	w^{D-1}(f^{\circ r})'(\frac{u}{w})=w^{D-1}\prod\limits_{j=0}^{r-1}d(f^{\circ j}(\frac{u}{w}))^{d-1}=0,\\
	&	w^{D-1}\gamma^i (f^{\circ r})'(\frac{v}{w})=w^{D-1}\gamma^i\prod\limits_{j=0}^{r-1}d(f^{\circ j}(\frac{v}{w}))^{d-1}=0,\\
	&	-uw^{D-2}(f^{\circ r})'(\frac{u}{w})+vw^{D-2}\gamma^i(f^{\circ r})'(\frac{v}{w})+Dw^{D-1}P_i\big(f^{\circ r}(\frac{u}{w}),f^{\circ r}(\frac{v}{w})\big)=0.
	\end{eqnarray}

Assume that $w=0$. 
From the equalities above,  there are $0\leq s\leq r-1$ and $0\leq t \leq r-1$ such that $F^{\circ s}(u,w)=F^{\circ t}(v,w)=0.$ Since
$$F^{\circ s}(u,0) = A^{\frac{d^s-1}{d-1}}u^{d^s}\quad \textrm{and}\quad F^{\circ t}(v,0) = A^{\frac{d^t-1}{d-1}}v^{d^t},$$ we obtain that $u=v=w=0,$ which is excluded.
 

Now we assume $w\neq 0$. Then there exist $u, v \in  \overline \FF_p$ such that $$f^{\circ s}(u) = f^{\circ t}(v) = 0\textrm{~and~}f^{\circ r}(u)-\gamma^i f^{\circ r}(v) = 0.$$

If $s = t$, we have $$P_i(f^{\circ (r-s)}(0),f^{\circ (r-s)}(0))=(1-\gamma^i)f^{\circ (r-s)}(0)= 0,$$ which implies $f^{\circ (r-s)}(0) = 0 = f^{\circ 0}(0)$ with $1 \leq  r-s \leq N,$ a contradiction to our assumption~\eqref{precondtion}.

If $s\neq t$, we have $P_i(f^{\circ (r-s)}(0), f^{\circ (r-t)}(0))=0$, which implies $f^{\circ (r-s+1)}(0)=f^{\circ (r-t+1)}(0)$, again a contradiction to our assumption~\eqref{precondtion}. 

Therefore, we conclude that the polynomial $P_i(f^{\circ r}(X),f^{\circ r}(Y))$ is irreducible over the algebraic completion $\overline\FF_p$ of $\FF_p$ for every $r \leq N-1$.
\end{proof}
\begin{notation}\label{no1}
	We put \begin{equation*}
	\rho_r(m):=\#\{x\in \Fp \;|\;f^{\circ r}(x)=m\}\quad \textrm{and}\quad \calW(r,k):=\sum\limits_{m\in \Fp}  \rho_r(m)^k.
	\end{equation*}
\end{notation}
Note that $\calW(r,k)$ plays an important role on counting $\#f^{\circ N}(\Fp)$ as in the proof of Theorem~\ref{main thm}.

As in \cite{HB}, for every $k\geq 0$, the function $\calW(r,k)$ is the number of solutions to
\begin{equation}\label{fr}
f^{\circ r}(x_1) = \dots = f^{\circ r}(x_k)\textrm{~in~}\Fp^k. 
\end{equation}

For every $r\geq 0$ we define the projective variety
 \begin{equation}\label{C0}
 \calC_r: X_0^{d^r}f^{\circ r}\left(\frac{X_1}{X_0}\right) = \dots =X_0^{d^r}f^{\circ r}\left(\frac{X_k}{X_0}\right).
 \end{equation}
 
We put 
 $$\Phi(X,Y;\ell,h):=  \begin{cases}
P_h(f^{\circ \ell}(X),f^{\circ \ell}(Y)),& \textrm{if~}\ell\geq 0\ \textrm{and}\ 1\leq h\leq d-1,\\
X - Y,& \textrm{if~}\ell = -1\ \textrm{and}\ h=0.
\end{cases}$$

Clearly, we have $$f^{\circ r}(X)-f^{\circ r}(Y)=A^{r}(X-Y)\prod_{\ell=0}^{r-1}\prod_{h=1}^{d-1} \Phi(X,Y; \ell,h).$$

For every solution $(x_1,\dots,x_k)$ to \eqref{fr} and every pair of distinct indices $1 \leq i\neq j \leq k$, if $x_i=x_j$, we put $\ell(x_i,x_j):=-1$ and $h(x_i,x_j):=0$; otherwise, we put $\ell(x_i,x_j)$ to be the smallest integer $\ell \in \{0, 1,\dots , r - 1\} $ such that $$\Phi(x_i, x_j ; \ell, h) = 0\quad \textrm{for some~} 1\leq h\leq d-1,$$
and denote $h(x_i,x_j):=h$.

Since 
$$\Phi(x_i,x_j;\ell(x_i,x_j),h)=f^{\circ \ell(x_i,x_j)}(x_i)-\gamma^hf^{\circ \ell(x_i,x_j)}(x_j)=0$$
holds for a unique $1\leq h\leq d-1$, we know that $h(x_i,x_j)$ is well-defined.

By definitions of $\ell(\cdot,\cdot)$ and $h(\cdot,\cdot)$, we have  
\begin{equation}
\begin{cases}
\ell(x_i,x_j)=\ell(x_j,x_i)=-1 \textrm{~and~}  h(x_j,x_i)= h(x_i,x_j)=0, \textrm{~or}\\
\ell(x_i,x_j)=\ell(x_j,x_i)\geq 0 \textrm{~and~}  h(x_j,x_i)+ h(x_i,x_j)=d.
\end{cases}
\end{equation}

%
\begin{notation}
	For a graph $G$ with $k$ vertices, we denote by $\calV_G$ and $\calE_G$ the sets of $G$'s vertices and edges, respectively.
\end{notation}
	
\begin{definition}\label{graph}
	An \emph{$(r, k,d)$-graph} is a graph $G$  with $\calV_G=\{1,2,\dots,k\}$ and each of its edges $\overline{ab}$ is associated two functions $\xi_G$ and $\eta_G$ on the ordered pairs $(a, b)$ and $(b, a)$ such that 
	\begin{itemize}
		\item $Range(\xi_G)=\{-1,\dots, r\}$ and $Range(\eta_G)=\{0,1,\dots,d-1\}$.
			\item When $\xi_G(a,b)=-1$, we have $\eta_G(b,a)=0.$
			\item When $\xi_G(a,b)\geq 0$, we have $\eta_G(b,a)\in \{1,\dots,d-1\}.$
		\item $\xi_G(a,b)=\xi_G(b,a) \textrm{~and~} \eta_G(b,a)+\eta_G(a,b)\equiv 0\pmod d.$
	\end{itemize}

 If there exits at least one edge $\overline{ab}$ in $G$ such that $\xi_G(a,b)=r$, we call $G$ a \emph{strict $(r,k,d)$-graph}. If for every pair of vertices in $G$ there is an edge connecting them, we call $G$ a \emph{complete $(r, k,d)$-graph}. 
\end{definition}

\begin{definition}\label{proper}
	Let $G$ be an $(r, k, d)$-graph. We call $G$ \emph{proper} if for every distinct vertices $a, b$ and $c$ such that $\overline{ab}$, $\overline{ac}$ and $\overline{bc}$ all belong to $\calE_G$,  we have the following.
	\begin{enumerate}
		\item If $\xi_G(a, b) = \xi_G(b, c)=-1$, then $\xi_G(a,c)=-1$.
			\item If $\xi_G(a, b) < \xi_G(b, c)$, then $\xi_G(a, c)=\xi_G(b, c)$ and $\eta_G(a, c)=\eta_G(b, c)$.
		\item If $0\leq \xi_G(a, b) = \xi_G(b, c)$ and $\eta_G(a,b) +\eta_G(b,c)\neq d$, then $\xi_G(a,c)=\xi_G(a,b)$ and $\eta_G(a,c)\equiv \eta_G(a,b) + \eta_G(b,c)\pmod d$.
		\item If $0\leq \xi_G(a, b) = \xi_G(b, c)$ and $\eta_G(a,b) +\eta_G(b,c)= d$, then $\xi_G(a,c)<\xi_G(a,b) = \xi_G(b,c)$.
	\end{enumerate} 
\end{definition}

\begin{lemma}\label{pro1}
	Let $G_{\underline x}$ be the complete $(r-1, k,d)$-graph associated to a solution $ \underline x=(x_1,\dots,x_k)$ to \eqref{fr} with $\xi_{G_{\underline x}}(a,b):=\ell(x_a,x_b)$ and $\eta_{G_{\underline x}}(a,b):=h(x_a,x_b)$ for every $\overline{ab}\in \calE_{G_{\underline x}}.$ Then $G_{\underline x}$ is proper.
\end{lemma}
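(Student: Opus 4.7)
The engine of the argument is a pair of elementary lemmas about the dynamics of $f$, relying only on the identity $\gamma^d = 1$. The first is a \emph{propagation} principle: if $f^{\circ \ell}(u) = \gamma^h f^{\circ \ell}(v)$ for some $h \in \{0,1,\ldots,d-1\}$, then $f^{\circ(\ell+1)}(u) = A\gamma^{hd}f^{\circ \ell}(v)^d + C = f^{\circ(\ell+1)}(v)$, and hence $f^{\circ m}(u) = f^{\circ m}(v)$ for every $m \geq \ell + 1$. In particular, $\xi_{G_{\underline x}}(i,j) = \ell_0$ implies $f^{\circ m}(x_i) = f^{\circ m}(x_j)$ for all $m \geq \ell_0 + 1$ (with the natural convention that $\ell_0 = -1$ means $x_i = x_j$). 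The second is the \emph{descent} principle, essentially the converse: if $f^{\circ \ell}(x_i) = f^{\circ \ell}(x_j)$ with $\ell \geq 0$, then $f^{\circ \ell - 1}(x_i)^d = f^{\circ \ell - 1}(x_j)^d$, so either $f^{\circ \ell-1}(x_i) = \gamma^h f^{\circ \ell-1}(x_j)$ with $h \ne 0$ (which forces $\xi(i,j) \leq \ell - 1$), or the two values are again equal and one recurses on $\ell - 1$; descending at most $\ell$ times culminates either in some relation with $h \ne 0$ or in $x_i = x_j$. Either way, $\xi(i,j) < \ell$.

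With these two principles available, I plan to verify Definition~\ref{proper}(1)--(4) by a direct case analysis on the triangle $\{a,b,c\}$. Condition (1) is immediate since $x_a = x_b = x_c$. For condition (2), setting $\ell_1 = \xi(a,b) < \ell_2 = \xi(b,c)$, propagation gives $f^{\circ \ell_2}(x_a) = f^{\circ \ell_2}(x_b) = \gamma^{\eta(b,c)} f^{\circ \ell_2}(x_c)$, proving $\xi(a,c) \leq \ell_2$ and pinning $\eta(a,c) = \eta(b,c)$ in the equality case. The reverse inequality will be shown by contradiction: if $\xi(a,c) < \ell_2$, propagation applied to both $\xi(a,b)$ and $\xi(a,c)$ at height $m = \ell_2$ yields $f^{\circ \ell_2}(x_b) = f^{\circ \ell_2}(x_a) = f^{\circ \ell_2}(x_c)$, so the descent principle forces $\xi(b,c) < \ell_2$, contradicting the definition of $\ell_2$.

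For condition (3), where $\ell_1 = \xi(a,b) = \xi(b,c) \geq 0$ and $\eta(a,b) + \eta(b,c) \ne d$, composing the two defining relations gives $f^{\circ \ell_1}(x_a) = \gamma^{\eta(a,b) + \eta(b,c)} f^{\circ \ell_1}(x_c)$ with exponent nonzero modulo $d$, which supplies both $\xi(a,c) \leq \ell_1$ and the formula for $\eta(a,c)$. Strict inequality $\xi(a,c) < \ell_1$ would, via propagation, force $f^{\circ \ell_1}(x_a) = f^{\circ \ell_1}(x_c)$; combined with the displayed relation this forces $f^{\circ \ell_1}(x_c) = 0$, hence also $f^{\circ \ell_1}(x_a) = f^{\circ \ell_1}(x_b) = 0$, and the descent principle applied to $f^{\circ \ell_1}(x_a) = f^{\circ \ell_1}(x_b)$ then yields $\xi(a,b) < \ell_1$, a contradiction. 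Condition (4) is the cleanest: $\eta(a,b) + \eta(b,c) = d$ makes the composed exponent trivial, so $f^{\circ \ell_1}(x_a) = f^{\circ \ell_1}(x_c)$ directly, and descent delivers $\xi(a,c) < \ell_1$.

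The main technical subtlety I anticipate is the possibility that certain iterates vanish, which could in principle spoil the uniqueness of the $\eta$-values; the descent principle is the tool I plan to use to short-circuit any such degeneracy by invariably producing a $\xi$-value smaller than one of the defining minima $\ell_1$ or $\ell_2$, and hence a contradiction to their definition. No use is made of the nondegeneracy hypothesis~\eqref{precondtion} in this lemma itself.
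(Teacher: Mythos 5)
Your proof is correct and supplies exactly the verification that the paper leaves to the reader---its entire proof of this lemma is the single sentence ``One can check that $G_{\underline x}$ satisfies all the conditions in Definition~\ref{proper}.'' The propagation and descent principles you isolate are the right tools, and your use of descent to rule out vanishing iterates at level $\xi(\cdot,\cdot)$ (by contradicting minimality) correctly handles the one genuine subtlety, namely the uniqueness of the $\eta$-value, which the paper likewise asserts with minimal justification.
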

\begin{proof}
	One can check that $G_{\underline x}$ satisfies all the conditions in Definition~\ref{proper}.
\end{proof}

We list the following properties for an $(r,k,d)$-graph $G$.

\begin{lemma}\label{div}
	Let $G$ be a complete proper strict $(r,k,d)$-graph with $r\geq 0$. Then there is  a unique partition $\{A_i\}_{i=1}^t$ of $\calV_G$ such that 
	if $a\in A_i$ and $a'\in A_j$ are two arbitrary vertices of $G$, then we have
	\[\begin{cases}
		 \xi_G(a, a') < r & \textrm{if~} i= j;\\
		 \xi_G(a, a') = r & \textrm{if~} i\neq j.
	\end{cases}\]
	Moreover, this $t$ satisfies $2\leq t\leq d$.	
%
\end{lemma}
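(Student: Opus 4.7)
The plan is to produce the partition as the equivalence classes of the relation
\[a \sim a' \;:\Longleftrightarrow\; a = a' \text{ or } \xi_G(a,a') < r\]
on $\calV_G$, and then separately verify the bounds $2\leq t \leq d$ via strictness and a pigeonhole argument on the $\eta_G$-values.

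First I would verify that $\sim$ is an equivalence relation. Reflexivity and symmetry are immediate (the latter from $\xi_G(a,b)=\xi_G(b,a)$ in Definition~\ref{graph}). Transitivity is the place where all four properness conditions get used: given $a\sim b$ and $b\sim c$ with $\xi_G(a,b),\xi_G(b,c)<r$, I would split into cases according to the comparison of $\xi_G(a,b)$ and $\xi_G(b,c)$. The unequal case is handled by Definition~\ref{proper}(2), which forces $\xi_G(a,c)$ to equal the larger of the two and hence $<r$; the equal case with value $-1$ is handled by (1); and the equal case with value $\geq 0$ is handled by (3) or (4), each of which forces $\xi_G(a,c)\leq \xi_G(a,b)<r$. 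Once $\sim$ is an equivalence relation, its classes $A_1,\dots,A_t$ give a partition; because $\xi_G$ takes values in $\{-1,0,\dots,r\}$, for $a\in A_i$ and $a'\in A_j$ with $i\neq j$ we automatically have $\xi_G(a,a')=r$. Uniqueness of the partition follows because the stated requirements are exactly the description of the $\sim$-classes.

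Next I would show $t\geq 2$. Since $G$ is strict, there exist vertices $a,b$ with $\xi_G(a,b)=r$; then $a\not\sim b$, giving at least two classes.

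The main content is showing $t\leq d$, and this is the step I expect to be the only nontrivial point. Suppose for contradiction $t\geq d+1$. Choose representatives $a_1,\dots,a_{d+1}$, one from each of $d+1$ distinct classes. Then $\xi_G(a_i,a_j)=r$ for all $i\neq j$. Applying properness at the triple $(a_i,a_1,a_j)$ with $\xi_G(a_i,a_1)=\xi_G(a_1,a_j)=r$: case (4) would force $\xi_G(a_i,a_j)<r$, a contradiction, so we must be in case (3), i.e.
\[\eta_G(a_i,a_1)+\eta_G(a_1,a_j)\not\equiv 0 \pmod d\]
for all $i\neq j$ in $\{2,\dots,d+1\}$. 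Using the graph axiom $\eta_G(a_1,a_j)\equiv -\eta_G(a_j,a_1)\pmod d$, this rewrites as $\eta_G(a_i,a_1)\not\equiv \eta_G(a_j,a_1)\pmod d$. Thus the map $i\mapsto \eta_G(a_i,a_1)$ from $\{2,\dots,d+1\}$ to $\{1,\dots,d-1\}$ is injective, which is impossible since the domain has $d$ elements and the codomain has $d-1$. This yields $t\leq d$ and completes the proof.
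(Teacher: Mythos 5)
Your proof is correct, and it reaches the conclusion by a somewhat different route than the paper. The paper fixes a base vertex $a_0$ and constructs the partition explicitly as buckets $B_0=\{b : \xi_G(a_0,b)<r\}\cup\{a_0\}$ and $B_j=\{b : \xi_G(a_0,b)=r,\ \eta_G(a_0,b)=j\}$ for $1\leq j\leq d-1$; this makes the bound $t\leq d$ immediate (there are only $d$ buckets), but it then asserts with essentially no detail that properness guarantees the required properties and the independence of the choice of $a_0$. You instead define the partition intrinsically as the classes of the relation $a\sim a' \iff a=a'$ or $\xi_G(a,a')<r$, carry out the transitivity check case by case against all four properness conditions (which is exactly the verification the paper leaves to the reader, and which also makes uniqueness and independence of any base point automatic), and then obtain $t\leq d$ by a separate pigeonhole argument on the $\eta_G(\cdot,a_1)$-values of $d+1$ hypothetical representatives, using properness (3)/(4) to rule out coincidences. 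The two arguments rest on the same axioms; yours trades the paper's free bound $t\leq d$ for a short contradiction argument, but in exchange it is self-contained and makes the uniqueness claim transparent. One small point worth keeping in mind: the properness conditions in Definition~\ref{proper} are stated only for triples all of whose connecting edges lie in $\calE_G$, so your repeated appeals to them implicitly use that $G$ is complete --- this is part of the hypothesis, but it deserves a word.
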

\begin{proof}
	Let $a_0$ be an arbitrary vertex of $G$. We put $$B_0:=\{b\;|\; b\in \calV_G\ \textrm{such that}\ \xi_G(a_0,b)<r\}\cup \{a_0\}$$
	and	
	$$B_j:=\{b\;|\; b\in \calV_G\ \textrm{such that}\ \xi_G(a_0, b)=r\ \textrm{and}\ \eta_G(a_0,b)=j\}$$
	for every $1\leq j\leq d-1$.
	
	Relabeling the non-empty sets among $\{B_j\;|\; 0\leq j\leq d-1\}$, we obtain $\{A_i\;|\; 1\leq i\leq t\}$.
	By Definition~\ref{proper}, we know that the partition $\{A_i\;|\; 1\leq i\leq t\}$ satisfies all the  properties that are required in this lemma, and it is independent of the choices of the starting vertex $a_0$.
\end{proof}


\begin{definition}\label{inductive definition}
	\noindent
\begin{enumerate}
	\item 	Let $G_0$ be a proper $(r,k,d)$-graph. Assume that there are three distinct vertices $a,$ $b$ and $ c$ in $G_0$ such that the edges $\overline{ab}$ and $\overline{bc}$ belong to $\calE_{G_0}$ but $\overline{ac}$ does not; and the functions  $\xi_{G_0}(\cdot,\cdot)$ and $\eta_{G_0}(\cdot,\cdot)$ satisfy one of the following.
	\begin{enumerate}
		\item[(i)] $\xi_{G_0}(a,b) = \xi_{G_0}(b,c) = -1$;
		\item[(ii)] $0\leq \xi_{G_0}(a,b) = \xi_{G_0}(b,c) \textrm{~and~}   \eta_{G_0}(a,b)+\eta_{G_0}(b,c)\neq d;$ 
		\item[(iii)] $ -1\leq \xi_{G_0}(a,b) < \xi_{G_0}(b,c).$
	\end{enumerate}

	We write $G$ for the $(r,k,d)$-graph generated from ${G_0}$ by adding an extra edge $\overline{ac}$ and putting
		\begin{itemize}
			\item[(i')]  $\xi_{G}(a,c): = -1$ and $\eta_{G}(a,c):=0$ for the case (i);
			\item[(ii')]  $\xi_{G}(a,c):= \xi_{G_0}(a,b) = \xi_{G_0}(b,c)$ and $\eta_{G}(a,c)$ to be the integer in $\{1,\dots,d-1\}$ which is congruent to $\eta_{G_0}(a,b)+\eta_{G_0}(b,c)$ modulo $d$ for the case  (ii);
			\item[(iii')] $\xi_{G}(a,c):= \xi_{G_0}(b,c)$ and $\eta_{G}(a,c):= \eta_{G_0}(b,c)$ for the case  (iii).
		\end{itemize}
If $G$ is also proper, then we say that ${G_0}$ \emph{generates} $G$. 

\item  More generally,  for two proper $(r,k,d)$-graphs $G_0$ and $G$ if there is a chain of proper $(r,k,d)$-graphs $G_0,G_1,\dots,G_s:=G$ such that for every $0\leq h\leq s-1$,	the graph $G_{h+1}$ is generated from $G_h$ by adding one edge as in (1), then we also say that $G_0$ \emph{generates} $G$.

	Moreover, if $G$ cannot generate a bigger proper $(r,k,d)$-graph by (1), we call $G$ a \emph{maximal extension} of $G_0$.
\end{enumerate}
	\end{definition}

%
%
\begin{definition}
		For every two $(r,k,d)$-graphs $G_0$ and $G$, if $\calE_{G_0}\subset\calE_{G}$ and
	for every edge $\overline{ab}\in \calE_{G_0}$ we have $\xi_{G_0}(a,b)=\xi_{G}(a,b)$ and $\eta_{G_0}(a,b)=\eta_{G}(a,b)$,
	then we call $G_0$ a \emph{subgraph} of $G$.
\end{definition}

\begin{lemma}\label{welldefined}
	Every subgraph $G_0$ of a  complete proper $(r,k,d)$-graph  $G$ is proper and has a unique extension.
\end{lemma}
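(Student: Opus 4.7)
The plan is to handle the two claims separately. Properness of $G_0$ is essentially tautological: each of the four conditions in Definition~\ref{proper} is a statement about the $\xi$- and $\eta$-values on the three edges of a single triangle, so any triangle lying entirely in $G_0$ is also a triangle in $G$ with identical edge values. Since $G$ is proper by hypothesis, each condition is inherited by $G_0$ edge for edge.

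For existence and uniqueness of the maximal extension, the key observation — to be verified case by case using Definition~\ref{proper}(1)--(3) — is that whenever Definition~\ref{inductive definition}(1) adds an edge $\overline{ac}$ to a proper subgraph of $G$ via rule (i$'$), (ii$'$), or (iii$'$), the values $\xi(a,c)$ and $\eta(a,c)$ that are prescribed already coincide with $\xi_G(a,c)$ and $\eta_G(a,c)$. An easy induction on the length of a generating chain $G_0,G_1,\dots,G_s$ then shows that each $G_h$ is a subgraph of $G$, and is therefore automatically proper (every triangle of $G_h$ is a triangle of $G$). Consequently, the generation process never fails the properness requirement, and adding an edge is a purely combinatorial act with its labels forced by $G$.

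Given this, I would define $E^*$ to be the smallest subset of $\calE_G$ containing $\calE_{G_0}$ that is closed under the three triangle rules of Definition~\ref{inductive definition}(1); it exists as the intersection of all such closed subsets. Any extension $G'$ of $G_0$ satisfies $\calE_{G'}\subseteq E^*$ by the same induction on the generating chain, and if $G'$ is a maximal extension then $\calE_{G'}$ must itself be closed under these rules — otherwise one more edge could legitimately be added (with values forced by $G$, preserving both properness and the subgraph-of-$G$ property), contradicting maximality. Hence $\calE_{G'}=E^*$, and because the label on each edge is dictated by $G$, the maximal extension is unique.

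The main subtlety I expect is not to confuse this unique maximal extension with $G$ itself: Definition~\ref{inductive definition}(1) supplies no rule corresponding to case~(4) of Definition~\ref{proper}, so in a configuration where $\xi_G(a,b)=\xi_G(b,c)\geq 0$ and $\eta_G(a,b)+\eta_G(b,c)=d$, the edge $\overline{ac}$ — which exists in $G$ with strictly smaller $\xi$-value — need not lie in $E^*$. Characterizing the maximal extension intrinsically as the triangle-closure $E^*$, rather than trying to identify it with all of $G$, is what allows the uniqueness argument to go through without touching this fourth case.
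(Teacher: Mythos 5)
Your proof is correct and follows essentially the same route as the paper: properness of $G_0$ is inherited edge-for-edge from $G$, the labels prescribed by rules (i$'$)--(iii$'$) are forced to agree with $\xi_G,\eta_G$ by Definition~\ref{proper}(1)--(3) so every graph in a generating chain remains a subgraph of $G$, and uniqueness follows because any two maximal extensions must contain the same forced edges. Your closure-set $E^*$ is just a repackaging of the paper's minimal-counterexample argument (which picks the first edge of one chain missing from the other maximal extension and adds it there), and your closing remark that the maximal extension need not be all of $G$ is consistent with the paper, which only establishes that identification later, in Lemma~\ref{construction}, for the trees built there.
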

\begin{proof}
	Since $G$ is proper, we know that $G_0$ is also proper.

	Let $G'$ be a maximal extension of $G_0$ with the chain of proper $(r,k,d)$-graphs 
	$G_0, G_1,$ $\dots, G_s:=G'$ as in Definition~\ref{inductive definition}(1). 
	Since $G_0$ is a subgraph of $G$, we can inductively prove that $G_h$ is a subgraph of $G$ for every $0\leq h\leq s$. Let $G''$ be an another maximal extension of $G_0$ and $h_0$ be the smallest index such that $G_{h_0}$ is not a subgraph of $G''$. Let $\overline{ab}$ be the edge that we add in $G_{h_0-1}$ to obtain $G_{h_0}$. Since $G''$ is also a subgraph of $G$, by Definition~\ref{inductive definition}(1), we can add $\overline{ab}$ into $G''$ as well, which leads a contradiction to $G''$ being a maximal extension of $G_0$. 
\end{proof}

\begin{definition}\label{def:tree}
	Let $G$ be a proper $(r, k,d)$-graph. 
\begin{enumerate}
	\item 	We call a chain of edges $\{\overline{a_i a_{i+1}}\}_{i=0}^{s-1}$, i.e. $a_i\neq a_j$ for every $i,j\in \{0,1,\dots,s\}$ such that $i\neq j$,  \emph{potentially complete} in $G$, if there  exists $0\leq u\leq s-1$ such that 
	\begin{enumerate}
		\item $\xi_G({a_0,a_1} )\leq\cdots\leq  \xi_G({a_{u},a_{u+1}})\geq \cdots \geq \xi_G({a_{s-1},a_{s}})$
	with no consecutive equalities in this chain of inequalities.
	\item If $\xi_G(a_{i-1}, a_{i})=\xi_G(a_i, a_{i+1})\geq 0$, then we have $\eta_G(a_{i-1}, a_{i})+\eta_G(a_{i}, a_{i+1})\not\equiv 0\pmod d.$
	\end{enumerate}
	
%
	
	\item We call $G$ an \emph{$(r, k,d)$-tree} if 
it contains no loop and for every two vertices $a$ and $b$ the unique chain connecting $a$ and $b$ is potentially complete in $G$.

\item If $G$ is an \emph{$(r, k,d)$-tree}, we denote by $\P_{G}(a,b)$ the unique chain in $G_0$ connecting the vertices $a$ and $b$. When $a=b$, we put $\P_{G_0}(a,a):=\{a\}$.
\end{enumerate}

%

\end{definition}

\begin{lemma}\label{le:1}
	Let $k\geq 2$, and $G$ be an $(r,k,d)$-tree. Assume that $\overline{aa'}\in \calE_G$ satisfies \begin{equation}\label{eq:9}
	\xi_{G}(a,a')=\max\{\xi_{G}(b,b')\;|\; \overline{bb'}\in \calE_G
	\}.
	\end{equation}
	Then for every vertex $a_0$ in $G$ if we put $\P_{G}(a_0,{a}):=\{\overline{a_ia_{i+1}}\}_{i=0}^{s}$, where $a_{s+1}=a$, then
	the sequence $\{\xi_{G}(a_i,a_{i+1})\}_{i=0}^{s}$ is non-decreasing. 
\end{lemma}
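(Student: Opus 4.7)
The plan is to apply the ``potentially complete'' axiom of Definition~\ref{def:tree} to an appropriate path in $G$ that contains the maximum-$\xi$ edge $\overline{aa'}$, and to exploit the fact that $\xi_G(a,a')$ attains the global maximum over $\calE_G$ in order to force monotonicity along $\P_G(a_0,a)$. Write $r_{\max}:=\xi_G(a,a')$ for this global maximum and $\xi_i:=\xi_G(a_i,a_{i+1})$ for $0\le i\le s$.

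First I would split into two cases according to whether $a'$ lies on $\P_G(a_0,a)$. In the first case, since $G$ is a tree, the unique path from $a'$ to $a$ is the single edge $\overline{a'a}$; combined with $a'\in\P_G(a_0,a)$, this forces $a'$ to be the penultimate vertex $a_s$, so the last edge of $\P_G(a_0,a)$ coincides with $\overline{aa'}$ and $\xi_s=r_{\max}$. In the second case, $a'\notin\P_G(a_0,a)$, so concatenating the edge $\overline{a_{s+1}a'}=\overline{aa'}$ to $\P_G(a_0,a)$ yields a chain of distinct vertices which must be $\P_G(a_0,a')$ by uniqueness of paths in the tree; its $(s+2)$-nd edge has $\xi$-value $r_{\max}$.

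In either case, I would invoke Definition~\ref{def:tree}(2) on the relevant chain (of length $s+1$ in Case 1, of length $s+2$ in Case 2) to obtain a peak index $u$ with $\xi_0\le\cdots\le\xi_u\ge\cdots\ge\xi_{\mathrm{end}}$ and no consecutive equalities among these inequalities. Since the final edge has $\xi$-value $r_{\max}$, the chain structure gives $\xi_u\ge\xi_{\mathrm{end}}=r_{\max}$, while maximality gives $\xi_u\le r_{\max}$; hence $\xi_u=\xi_{u+1}=\cdots=\xi_{\mathrm{end}}=r_{\max}$. The ``no consecutive equalities'' condition applied to the decreasing tail $\xi_u\ge\xi_{u+1}\ge\cdots\ge\xi_{\mathrm{end}}$ then permits at most a single equality sign, forcing $u\in\{\mathrm{end},\mathrm{end}-1\}$. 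In both sub-cases the prefix $\xi_0\le\xi_1\le\cdots\le\xi_s$ corresponding to $\P_G(a_0,a)$ is non-decreasing, which is exactly the desired conclusion; the degenerate sub-cases $a_0=a$ (empty chain) and $a_0=a'$ (single edge) are immediate.

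The only genuine bookkeeping obstacle is Case 2: one has to verify that the chain produced by appending $\overline{aa'}$ to $\P_G(a_0,a)$ is actually $\P_G(a_0,a')$ (which requires no vertex repetition, guaranteed by the tree property together with the Case 2 hypothesis $a'\notin\P_G(a_0,a)$) and to carefully track that the peak index $u$ for a chain of length $s+2$ forces monotonicity on the first $s+1$ edges rather than just the first $s$. Aside from this indexing care, the argument is a direct application of the potentially-complete axiom together with the maximality of $r_{\max}$.
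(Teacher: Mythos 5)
Your proof is correct and follows essentially the same route as the paper's: split according to whether $a'$ is the penultimate vertex of $\P_G(a_0,a)$ (equivalently, lies on that path), reduce the other case to this one by appending $\overline{aa'}$ to obtain $\P_G(a_0,a')$, and then use the potentially-complete condition together with the maximality of $\xi_G(a,a')$ to pin the peak index at (or one step before) the final edge. Your write-up actually makes explicit the peak-index and no-consecutive-equalities bookkeeping that the paper compresses into one sentence, but the underlying argument is the same.
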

\begin{proof}
	\noindent\textbf{Case I.} 
	When $a_s=a'$. Since $G$ is an $(r,k,d)$-tree, we know that
	 $\P_{G}(b,{a})$  is potentially complete.
	 Combined with \eqref{eq:9}, this implies that $\{\xi_{G}(a_i,a_{i+1})\}_{i=0}^{s-1}$ is non-deceasing.

		\noindent\textbf{Case II.}  When $a_s\neq a'$. By Definition~\ref{def:tree}(2), we know that $\P_{G}(b,{a'})=\P_{G}(b,{a})\cup \overline{aa'}$.
Replacing $a$ in \textbf{Case I} by $a'$ and putting $a_{s+1}:=a'$, we know that $\{\xi_{G}(a_i,a_{i+1})\}_{i=0}^{s}$ is non-deceasing, which completes the proof of this case.
\end{proof}
\begin{lemma}\label{construction}
For every complete proper $(r,k,d)$-graph $G$ there exists an $(r,k,d)$-tree $G_0$ which generates $G$.
\end{lemma}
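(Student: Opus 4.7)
The plan is to argue by induction on $r$, with the cases $k \leq 2$ being trivial since any such graph is itself a tree generating itself. In the inductive step, if $G$ is not strict (i.e.\ no edge carries $\xi = r$), then $G$ is already a complete proper $(r-1, k, d)$-graph and the inductive hypothesis supplies the required tree.

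Suppose $G$ is strict. Lemma \ref{div} produces a partition $\{A_1, \ldots, A_t\}$ of $\calV_G$ with $2 \leq t \leq d$ such that edges inside each $A_i$ satisfy $\xi_G < r$ while edges between distinct blocks satisfy $\xi_G = r$. Each induced subgraph $G|_{A_i}$ is a complete proper $(r-1, |A_i|, d)$-graph, hence by the inductive hypothesis admits an $(r-1, |A_i|, d)$-tree $T_i$ that generates it. For each $i$ with $|A_i| \geq 2$ I choose $a_i$ to be an endpoint of an edge of $T_i$ attaining the maximum $\xi$-value; if $|A_i| = 1$, $a_i$ is its unique vertex. I then define $G_0$ to be the disjoint union of the $T_i$'s augmented by the edges $\overline{a_1 a_i}$ for $2 \leq i \leq t$, each carrying the $\xi = r$ and $\eta$ values inherited from $G$.

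The first verification is that $G_0$ is itself an $(r, k, d)$-tree. Acyclicity is immediate from the construction. For the potentially complete property, any path lying inside a single $T_i$ is handled by the inductive hypothesis. A path joining $b \in A_i$ and $c \in A_j$ with $i \neq j$ (say both distinct from $1$) decomposes as $b \to \cdots \to a_i \to a_1 \to a_j \to \cdots \to c$; Lemma \ref{le:1} applied to $T_i$ and $T_j$ forces the $\xi$-values to be non-decreasing along the first segment (all less than $r$), to equal $r$ on the two central edges, and to be non-increasing along the last segment. The delicate check is condition (b) of Definition \ref{def:tree}(1) at this two-step $r$-plateau, which demands $\eta_G(a_i, a_1) + \eta_G(a_1, a_j) \not\equiv 0 \pmod d$; this follows because $\xi_G(a_i, a_j) = r$ by Lemma \ref{div}, so Definition \ref{proper}(4) would otherwise force $\xi_G(a_i, a_j) < r$. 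The cases where one of $i, j$ equals $1$ are similar but shorter.

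The second verification is that the unique maximal extension $G^*$ of $G_0$, provided by Lemma \ref{welldefined}, coincides with $G$. Since $G^* \subseteq G$, it suffices to produce every edge $\overline{ab} \in G$ inside $G^*$. Edges with both endpoints in the same $A_i$ are generated by the extension within $T_i$ via the inductive hypothesis. For $a \in A_i$ and $b \in A_j$ with $i \neq j$, starting from the path $a \to a_i \to a_1 \to a_j \to b$ in $G^*$ (whose initial and final segments are already in $G^*$ by the in-block induction), I successively apply rule (iii) to obtain $\overline{a a_1}$, rule (ii) to obtain $\overline{a a_j}$, and rule (iii) again to obtain $\overline{a b}$. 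The main obstacle throughout is the bookkeeping: at each step the $\xi$ and $\eta$ values created by (i')--(iii') must coincide with those of the corresponding edge of $G$, and this matching is guaranteed by Definition \ref{proper}(2)--(4); in particular the hypothesis $\eta(a, a_1) + \eta(a_1, a_j) \neq d$ needed to invoke rule (ii) is again forced by $\xi_G(a, a_j) = r$.
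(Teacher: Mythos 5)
Your proof is correct and follows essentially the same route as the paper: the partition from Lemma~\ref{div}, inductively constructed trees on the blocks, a star through $a_1$ joining representatives chosen at maximal-$\xi$ edges, Lemma~\ref{le:1} for monotonicity along the two segments, the key check $\eta_G(a_i,a_1)+\eta_G(a_1,a_j)\not\equiv 0\pmod d$ deduced from Definition~\ref{proper}(4), and the identification of the unique maximal extension with $G$ via Lemma~\ref{welldefined}. The one organizational slip is the base case: since your induction is on $r$, it must bottom out at $r=-1$ rather than at $k\le 2$, and the complete proper $(-1,k,d)$-graph with $k\ge 3$ (all edges carrying $\xi=-1$) is not covered as written, though a star centered at any vertex handles it immediately, exactly as in the paper.
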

\begin{proof}
	When $k=1$ the result is trivial for every $r\geq -1$. 
	
	Now assume that it holds for every $m\leq k$ and $r\geq -1$. For $m=k+1$, without loss of generality, we assume that $G$ is a complete proper strict $(r,k+1,d)$-graph. 
	
	If $r=-1$, we choose an arbitrary vertex $a$ in $G$. Connecting $a$ to every other vertices in $G$, we obtain a proper $(-1,k+1,d)$-graph $G_0$. Clearly, $G_0$ is a $(-1,k+1,d)$-tree with the unique maximal extension $G$.
	
	Now assume $r\geq 0$. 
		By Lemma~\ref{div}, we obtain a partition $\{A_i\}_{i=1}^t$ of $\calV_G$ such that
		\begin{enumerate}
	\item[(i)]	$	|A_i|\leq k$ for every $1\leq i\leq t$.
			\item[(ii)] For every $a\in A_i$ and $b\in A_j$ we have
			\[\begin{cases}
				\xi_{G}(a, b) < r&\textrm{if~} i= j,\\
				\xi_{G}(a, b) = r& \textrm{if~} i\neq j.
			\end{cases}\]
		\end{enumerate} 
	
	By induction, for every $1\leq i\leq t$ we can construct an $(r-1,|A_i|,d)$-tree $G_{i,0}$ which generates the restriction of $ G$ on $A_i$. Now we determine a representative $a_i$ for each $A_i$ as follows.
	\begin{itemize}
		\item If $|A_i|=1$, we put $a_i$ to be the unique vertex in $A_i$.
		\item If $|A_i|\geq 2$, we put $a_i$ to be a vertex in $A_i$ such that 
		$$\xi_G(a_i,a_i')=\max\{\xi_G(a,b)\;|\; a, b\in A_i
		\}$$ for some other vertex ${a_i'}\in A_i$.
	\end{itemize}

	We denote by $G_0$ the subgraph of $G$ such that
	$$\calE_{G_0}=\bigcup_{i=1}^t \calE_{G_{i,0}}\cup \{\overline{a_1a_i}\;|\;2\leq i\leq t\}.$$
	
	Now we prove that $G_0$ is an $(r,k+1,d)$-tree. Since $G_0$ contains no loop, it is enough to prove that for every two vertices $a$ and $b$ the chain $\P_{G_0}(a,b)$ is potentially complete.
	 
	 Let $a\in A_i$ and $b\in A_j$ be two distinct vertices of $G$.
	 
	\noindent\textbf{Case I.} When $i=j$. From $\P_{G_{0}}(a,b)=\P_{G_{i,0}}(a,b)$, we know that $\P_{G_0}(a,b)$ is potentially complete.
	
\noindent\textbf{Case II.} When $i\neq j$ and one of $i$ and $j$ is equal to $1$. Without loss of generality, we assume $i=1$. By the construction of $G_0$, we know that $$\P_{G_0}(a,b)=\P_{G_{1,0}}(a,{a_1})\cup \overline{a_1a_j}\cup \P_{G_{j,0}}({a_j},{b}). $$

	By Lemma~\ref{le:1}, we know that $\xi_G$	is increasing along $\P_{G_{i,0}}(a,a_1)$ and decreasing along $\P_{G_{j,0}}({a_j},{b})$. Combined with (ii), this implies that $\P_{G_0}(a,b)$ is potentially complete.
	
\noindent\textbf{Case III.} When $i\neq j$, $i\neq 1$ and $j\neq 1$. From the construction of $G_0$, we know that $$\P_{G_0}(a,b)=\P_{G_{i,0}}(a,{a_i})\cup \overline{a_ia_1}\cup \overline{a_1a_j}\cup \P_{G_{j,0}}({a_j},{b}).$$ 
 From $\eta_G(a_1,a_i)\neq \eta_G(a_1,a_j)$, we have $$\eta_G(a_i,a_1)+ \eta_G(a_1,a_j)\equiv -\eta_G(a_1,a_i)+ \eta_G(a_1,a_j)\not\equiv 0\pmod d.$$
 
Similar to the argument in \textbf{Case II}, we show that $\P_{G_0}(a,b)$ is potentially complete.

Now we are  left to show that $G_0$ generates $G$. For every vertices $a$ and $b$, since 
$\P_{G_0}(a,b)$ is potentially complete, using Definition~\ref{inductive definition}(1) inductively on this chain, we generate a graph $G'$ from $G_0$ such that $G'$ is a subgraph of $G$ and $G'$ contains $\overline{ab}$. 

By Lemma~\ref{welldefined}, $G_0$ has the unique maximal extension. Since $a$ and $b$ are arbitrarily chosen, we know that its maximal extension is exactly $G$, which finishes the proof.
%
%
\end{proof}
 
\begin{notation}
We correspond a proper $(r,k,d)$-graph $G$ a projective variety
	\begin{equation}\label{eq:CG0}
	\calC_G:\Phi(X_a,X_b,X_0;\xi_G(a,b),\eta_G(a,b))=0\quad\textrm{for every~ } \overline{ab}\in \calE_G,
	\end{equation}
	where $$\Phi(X,Y,Z;\ell,h)=\begin{cases}
	X-Y,& \textrm{when} \ \ell=-1;\\
	Z^{d^\ell}\Phi(\frac{X}{Z},\frac{Y}{Z};\ell,h),& \textrm{when} \  \ell\geq 0.
	\end{cases}$$
\end{notation}

Note that \begin{equation}\label{eq:ab}
\Phi(X_a,X_b,X_0;\xi_G(a,b),\eta_G(a,b))=-\gamma^{\eta_G(a,b)}\Phi(X_b,X_a,X_0;\xi_G(b,a),\eta_G(b,a)).
\end{equation}
The variety $\calC_G$ is defined independent of the order of $a$ and $b$.

\begin{lemma}\label{lemma1}
	For every $r\geq 0$ if $G$ is a complete proper $(r-1, k,d)$-graph, then $\calC_G$ is a subvariety of $\calC_r$.
\end{lemma}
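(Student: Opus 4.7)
The plan is to verify directly that each defining equation of $\calC_r$ is implied by the defining equations of $\calC_G$. Since $G$ is complete, every pair of distinct indices $a, b \in \{1,\dots,k\}$ carries an edge $\overline{ab} \in \calE_G$, while $\calC_r$ is defined by the equations $F^{\circ r}(X_a,X_0) = F^{\circ r}(X_b,X_0)$ for all such pairs. Therefore it suffices to fix an edge $\overline{ab}$ with $\ell := \xi_G(a,b)$ and $h := \eta_G(a,b)$, and to deduce $F^{\circ r}(X_a,X_0) = F^{\circ r}(X_b,X_0)$ from the single equation $\Phi(X_a,X_b,X_0;\ell,h)=0$.

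First I would handle the degenerate case $\ell = -1$. By the definition of $\Phi$, the equation on $\calC_G$ reduces to $X_a = X_b$, from which $F^{\circ r}(X_a,X_0) = F^{\circ r}(X_b,X_0)$ is immediate.

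Next, for $\ell \geq 0$ (so that $h \in \{1,\dots,d-1\}$), the equation unpacks to
\[
F^{\circ \ell}(X_a,X_0) \;=\; \gamma^h\, F^{\circ \ell}(X_b,X_0).
\]
The key computation is the projective recursion
\[
F^{\circ (s+1)}(X,Z) \;=\; A\cdot F^{\circ s}(X,Z)^d + C\,Z^{d^{s+1}},
\]
which follows directly from $f(x)=Ax^d+C$ and the definition of $F^{\circ s}$. Applying this with $s=\ell$ to both $(X_a,X_0)$ and $(X_b,X_0)$ and using $\gamma^{hd}=1$ (since $\gamma$ is a primitive $d$-th root of unity), the factor $\gamma^h$ is washed away, giving $F^{\circ(\ell+1)}(X_a,X_0) = F^{\circ(\ell+1)}(X_b,X_0)$.

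Finally, since $\ell \leq r-1$, I can iterate the same recursion $r-\ell-1 \geq 0$ further times, each step preserving the equality, to conclude $F^{\circ r}(X_a,X_0) = F^{\circ r}(X_b,X_0)$. As $\overline{ab}$ was an arbitrary edge and $G$ is complete, all defining equations of $\calC_r$ hold on $\calC_G$, so $\calC_G \subseteq \calC_r$. The argument involves no real obstacle beyond careful bookkeeping with the homogenization; the only substantive point is that the $d$-th-power step absorbs the $\gamma^h$ twist, which is exactly why the hypothesis $d\mid p-1$ (ensuring the existence of $\gamma$) enters here.
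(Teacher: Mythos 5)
Your proof is correct and in substance the same as the paper's: the paper's one-line argument implicitly invokes the factorization $f^{\circ r}(X)-f^{\circ r}(Y)=A^{r}(X-Y)\prod_{\ell,h}\Phi(X,Y;\ell,h)$, so that the vanishing of the single factor attached to the edge $\overline{ab}$ forces $F^{\circ r}(x_a,x_0)=F^{\circ r}(x_b,x_0)$. You instead re-derive this implication by propagating the relation one iterate at a time via $F^{\circ(s+1)}=A(F^{\circ s})^d+CZ^{d^{s+1}}$ and $\gamma^{hd}=1$, which is exactly how that factorization is established, so the two arguments coincide up to presentation (yours being the more explicit about the homogenization).
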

\begin{proof}
	Let $\underline x$ be an arbitrary point on the variety $\calC_G$. Since $G$ is complete, for every two vertices $a$ and $b$ we have 
	$\Phi(x_a,x_b,x_0;\xi_G(a,b),\eta_G(a,b))=0$. Combined with $\xi_G(a,b)\leq r-1$, this implies
	$x_0^{d^r}f^{\circ r}(x_a)=x_0^{d^r}f^{\circ r}(x_b),$ which completes the proof.
 \qedhere
\end{proof}
\begin{lemma}\label{dif}
For two complete proper  $(r, k,d)$-graphs $G_1$ and $G_2$ if $G_1\neq G_2$, then $\calC_{G_1}\neq  \calC_{G_2}$.
\end{lemma}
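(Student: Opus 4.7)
Suppose for contradiction that $\calC_{G_1}=\calC_{G_2}$ despite $G_1\neq G_2$. Let $\overline{ab}$ be an edge on which they disagree, writing $(\ell_i,h_i):=(\xi_{G_i}(a,b),\eta_{G_i}(a,b))$ for $i=1,2$. The two polynomials $\Phi(X_a,X_b,X_0;\ell_i,h_i)$ are distinct and absolutely irreducible: irreducibility is Lemma~\ref{first lemma} for $\ell_i\geq 0$ and trivial for $\ell_i=-1$; distinctness is clear by comparing bi-degrees when $\ell_1\neq \ell_2$, and by comparing the $P_h$-coefficients when $\ell_1=\ell_2$. Write $H_i\subset\PP^2$ for the irreducible curve they define. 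Then $H_1\cap H_2$ is a proper finite subset of $H_1$, and the strategy is to find a point $\underline x\in\calC_{G_1}$ whose projection $[x_0:x_a:x_b]$ lies in $H_1\setminus H_2$, for such a point lies in $\calC_{G_1}$ but not in $\calC_{G_2}$.

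The crucial step is to cut $\calC_{G_1}$ down to a variety defined by only $k-1$ equations. By Lemma~\ref{construction} there exists an $(r,k,d)$-tree $T$ that generates $G_1$. I claim $\calC_T=\calC_{G_1}$. The inclusion $\calC_{G_1}\subseteq\calC_T$ is clear since $\calE_T\subseteq\calE_{G_1}$. For the reverse, I verify step by step along the generation chain (Definition~\ref{inductive definition}) that the equation attached to each newly added edge $\overline{ac}$ is implied by the equations on the two existing edges $\overline{ab}$ and $\overline{bc}$. Cases (i) and (ii) are immediate algebraic substitutions. The subtle case is (iii), which rests on the identity $f(\gamma^h Y)=A(\gamma^h)^d Y^d+C=AY^d+C=f(Y)$ (valid because $\gamma^d=1$, i.e., because of the running hypothesis $d\mid p-1$): applying $f^{\xi_{G_0}(b,c)-\xi_{G_0}(a,b)}$ to the lower-index equation on $\overline{ab}$ and repeatedly using this identity yields $f^{\xi_{G_0}(b,c)}(X_a)=f^{\xi_{G_0}(b,c)}(X_b)$, which combined with the equation on $\overline{bc}$ gives exactly the new edge's equation.

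Once $\calC_T=\calC_{G_1}$ is in hand, $\calC_{G_1}$ has pure projective dimension one. Parameterize $\calC_T$ by breadth-first search from a chosen root vertex: the pair $(X_0,X_{\mathrm{root}})$ is free, and each subsequent coordinate is determined up to finitely many choices by its tree-edge equation, giving a two-dimensional affine cone. Now consider the projection $\pi\colon\calC_{G_1}\to\PP^2$, $[x_0:\cdots:x_k]\mapsto[x_0:x_a:x_b]$; its image lies in $H_1$. For each one-dimensional irreducible component $C$ of $\calC_{G_1}$, the image $\pi(C)$ is an irreducible closed subset of the irreducible curve $H_1$, hence either all of $H_1$ or a single point. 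The latter cannot occur when $k\geq 3$: the projective constancy of $[x_0:x_a:x_b]$ along $C$ would, via BFS propagation through $T$, force each remaining $x_c$ into a finite set of solutions of its tree-edge equation, contradicting $\dim C=1$. (When $k=2$ the lemma is trivial since then $\calC_{G_1}=H_1$.) Thus some component $C$ satisfies $\pi(C)=H_1$; choosing $p\in H_1\setminus H_2$ (nonempty since $H_1\cap H_2$ is a proper finite subset of $H_1$) and any preimage $q\in C$ with $\pi(q)=p$ produces the desired point $q\in\calC_{G_1}\setminus\calC_{G_2}$.

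The principal obstacle is establishing the equality $\calC_T=\calC_{G_1}$ in case (iii) of the generation rule, where the identity $f\circ\gamma^h=f$, and hence the hypothesis $d\mid p-1$, plays its essential role. Once this reduction to a spanning tree is in place, the remaining projection-and-dimension argument is standard.
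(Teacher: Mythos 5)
Your argument is essentially correct, but it takes a genuinely different route from the paper's. The paper's proof is a short, direct manipulation of the defining relations: it picks an edge $\overline{ab}$ on which the two labelings disagree, splits into the cases $\xi_{G_1}(a,b)\neq\xi_{G_2}(a,b)$ and $\xi_{G_1}(a,b)=\xi_{G_2}(a,b)$ with $\eta_{G_1}(a,b)\neq\eta_{G_2}(a,b)$, and in each case uses the identity $f(\gamma^hY)=f(Y)$ to force the degenerate relation $f^{\circ\ell}(X_a)=f^{\circ\ell}(X_b)=0$ on the supposed common variety, contradicting the meaning of $\xi$; it needs no trees, no irreducibility, and no appeal to hypothesis~\eqref{precondtion}. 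You instead exhibit an explicit point of $\calC_{G_1}\setminus\calC_{G_2}$ by projecting to the plane $[x_0:x_a:x_b]$, which is more geometric and makes the conclusion very concrete, but it is bought at a real price: you must invoke Lemma~\ref{first lemma}, hence hypothesis~\eqref{precondtion} and the bound $\xi\leq N-1$, neither of which appears in the statement of the lemma (they do hold in the one place the lemma is applied, so this is a caveat rather than a fatal flaw); you re-derive Lemma~\ref{complete and tree} (your case (iii) computation, including the role of $f\circ\gamma^h=f$ and of $d\mid p-1$, is exactly the paper's); and you need the spanning-tree dimension count, which the paper only needs later for Proposition~\ref{key proposition}.

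Two small points you should still nail down. For $\pi(C)$ to be closed you need the center of projection $\{x_0=x_a=x_b=0\}$ to be disjoint from $\calC_{G_1}$; this is true (if $x_0=x_a=0$, every edge equation at $a$ forces the neighbouring coordinate to vanish, and connectedness of the complete graph kills the whole point), but it deserves a sentence — alternatively, a $1$-dimensional irreducible constructible subset of the irreducible curve $H_1$ is already dense, which is enough to meet the nonempty open set $H_1\setminus H_2$. Also, your BFS fiber-finiteness argument should note that the coefficient of the top power of $X_v$ in each edge form is a nonzero constant (namely $-\gamma^hA^{(d^\ell-1)/(d-1)}$), so the fibers are finite even over the hyperplane $x_0=0$.
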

\begin{proof}
	Suppose this lemma is false. Then there exist  two complete proper  $(r, k,d)$-graphs $G_1$ and $G_2$ such that $G_1\neq G_2$ and $\calC_{G_1}=\calC_{G_2}$.
	
	Now we have the following two cases.
	
	\textbf{Case I}. There exists an edge $\overline{ab}\in \calE_{G_1} $ such that $\ell_1:=\xi_{G_1}(a,b)>\ell_2:=\xi_{G_2}(a,b)$. Consider the graph $G_2$. We have $f^{\circ \ell_2}(X_a)=\gamma^{\eta_{G_2}(a,b)}f^{\circ \ell_2}(X_b)$, which implies $f^{\circ \ell_1}(X_a)=f^{\circ \ell_1}(X_b)$. From $\calC_{G_1}=\calC_{G_2}$, we obtain $\xi_{G_1}(a,b)<\ell_1$,
	 a contradiction.
	
		\textbf{Case II}. There exists an edge $\overline{ab} \in \calE_{G_1}$ such that $$\xi_{G_1}(a,b)=\xi_{G_2}(a,b), \ \textrm{but}\ \eta_{G_1}(a,b)\neq \eta_{G_2}(a,b).$$
		
		Denote $\ell:=\xi_{G_1}(a,b),$ $h_1:=\eta_{G_1}(a,b)$ and  $h_2:=\eta_{G_2}(a,b).$
		 Then we have $f^{\circ \ell}(X_a)=\gamma^{h_1}f^{\circ \ell}(X_b)$ and  
		$f^{\circ \ell}(X_a)=\gamma^{h_2}f^{\circ \ell}(X_b)$, which implies $f^{\circ \ell}(X_a)=f^{\circ \ell}(X_b)=0$, a contradiction to $\xi_{G_1}(a,b)=\ell$.
\end{proof}

Recall that we define $\calW(r,k)$ in Notation~\ref{no1}.
\begin{lemma}\label{decomposition}
For every $r\geq 0$ we have $$\calW(r,k)+(p-1)\gcd(p-1,d^r)^{k-2}=\#\Big(\bigcup_{G} \calC_G(\Fp)\Big),$$
	where the sum runs over all complete proper  $(r-1, k,d)$-graphs.
\end{lemma}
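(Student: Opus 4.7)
The plan is to partition $\bigcup_G \calC_G(\Fp)$ according to whether the projective coordinate $X_0$ vanishes, and count each piece separately.

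On the affine chart $\{X_0 \neq 0\}$, Lemma~\ref{lemma1} gives $\calC_G \subseteq \calC_r$, so every affine $\Fp$-point of the union dehomogenizes to some $\underline x \in \Fp^k$ solving $f^{\circ r}(x_1) = \cdots = f^{\circ r}(x_k)$. Conversely, Lemma~\ref{pro1} attaches to every such solution the complete proper $(r-1, k, d)$-graph $G_{\underline x}$, and by construction $\underline x \in \calC_{G_{\underline x}}(\Fp)$. Hence the affine part contributes exactly $\calW(r, k)$ points.

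On the hyperplane at infinity $\{X_0 = 0\}$, the identity $F^{\circ \ell}(X, 0) = A^{(d^\ell - 1)/(d-1)} X^{d^\ell}$ simplifies each defining equation $\Phi(X_a, X_b, X_0; \ell, h) = 0$ to $X_a = X_b$ (when $\ell = -1$) or $X_a^{d^\ell} = \gamma^h X_b^{d^\ell}$ (when $\ell \geq 0$). I plan to mirror the affine argument: Lemma~\ref{lemma1} implies that any infinity point of the union satisfies the infinity equations of $\calC_r$, namely $x_a^{d^r} = x_b^{d^r}$ for all $a, b$. For the converse, given such a point, I construct an analog of $G_{\underline x}$ at infinity by setting $\xi_G(a, b) = -1$ when $x_a = x_b$, and otherwise $\xi_G(a, b) = s - 1$, where $s \geq 1$ is the minimal integer with $(x_a/x_b)^{d^s} = 1$, together with $\eta_G(a, b) \in \{1, \ldots, d-1\}$ determined by $\gamma^{\eta_G(a, b)} = (x_a/x_b)^{d^{s-1}}$. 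The final enumeration at infinity is direct: the identity $x_a^{d^r} = x_b^{d^r}$ forces either all $x_i = 0$ (not a projective point) or all $x_i \in \Fp^*$, and in the latter case the ratios $x_j/x_1$ range independently over the cyclic subgroup $\mu_{d^r}(\Fp) \subseteq \Fp^*$ of order $\gcd(p-1, d^r)$, which yields the claimed correction term after normalization.

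The main technical obstacle is verifying that the graph $G$ just constructed at infinity satisfies the four properness conditions of Definition~\ref{proper}. This reduces to elementary identities in $\mu_{d^r}(\Fp)$ governing the ``$d$-adic level'' $s(\alpha) := \min\{s \geq 0 : \alpha^{d^s} = 1\}$ under multiplication: specifically, $s(\alpha) < s(\beta)$ implies $s(\alpha\beta) = s(\beta)$ with $\alpha^{d^{s(\beta)-1}} = 1$, while $s(\alpha) = s(\beta) = s \geq 1$ yields $s(\alpha\beta) = s$ or $s(\alpha\beta) \leq s - 1$ according as $\alpha^{d^{s-1}} \beta^{d^{s-1}} \neq 1$ or $= 1$. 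These purely group-theoretic facts translate case by case into Definition~\ref{proper}(1)--(4), replacing the $f^{\circ r}$-iteration input that drove the analogous verification in Lemma~\ref{pro1}.
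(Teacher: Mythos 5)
Your decomposition is exactly the one the paper uses: Lemma~\ref{lemma1} gives the inclusion of each $\calC_G$ in $\calC_r$, the chart $\{X_0\neq 0\}$ contributes $\calW(r,k)$ via Lemma~\ref{pro1}, and the hyperplane at infinity contributes the correction term. Where you go beyond the paper is at infinity: the paper cites Lemmas~\ref{pro1} and \ref{lemma1} to assert $\bigcup_G\calC_G(\Fp)=\calC_r(\Fp)$ outright, even though Lemma~\ref{pro1} only attaches a graph $G_{\underline x}$ to \emph{affine} solutions; your explicit construction of a complete proper $(r-1,k,d)$-graph for each point $(0:x_1:\cdots:x_k)$ with the $x_i^{d^r}$ all equal, via the $d$-adic level of $x_a/x_b$ inside $\mu_{d^r}(\Fp)$, supplies the surjectivity at infinity that the paper leaves implicit, and the four properness axioms do reduce to the group-theoretic identities you list (for instance, axiom (4) follows from $(\alpha\beta)^{d^{s-1}}=\gamma^{\eta(a,b)+\eta(b,c)}=\gamma^d=1$). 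One caution on your final step: counted as projective points (normalize $x_1=1$, since no coordinate can vanish), the locus at infinity has exactly $\gcd(p-1,d^r)^{k-1}$ elements, which equals the displayed term $(p-1)\gcd(p-1,d^r)^{k-2}$ only when $\gcd(p-1,d^r)=p-1$; no ``normalization'' produces the stated constant in general, so you should not assert that it does. This discrepancy is already present in the paper's own computation of $\#\{(0,x_1,\dots,x_k)\mid x_1^{d^r}=\cdots=x_k^{d^r}\}$ and is harmless downstream, because the polynomial $Q(T)$ in the proof of Theorem~\ref{main thm} vanishes at $T=\gcd(p-1,d^N)$, so the correction term drops out of the final sum either way.
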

\begin{proof}
	By Lemmas~\ref{pro1} and \ref{lemma1}, we have 
	\begin{multline*}
	\#\Big(\bigcup_{G} \calC_G(\Fp)\Big)=	\#\calC_r(\Fp)=\#\{\underline x\in \calC_r(\Fp)\;|\; x_0\neq0\}+\#\{\underline x\in \calC_r(\Fp)\;|\; x_0=0\}\\
		=\calW(r,k)+\{(0,x_1,\dots,x_k)\;|\; x_1^{d^r}=\cdots=x_k^{d^r}\}\\
		=\calW(r,k)+(p-1)\gcd(p-1,d^r)^{k-2}.
\qedhere
	\end{multline*}

\end{proof}
For a complete proper $(r-1, k,d)$-graph $G$, in order to estimate $ \#\calC_G(\Fp)$, we need the following key proposition.
\begin{proposition}\label{key proposition}
	With the assumption~\eqref{precondtion}, for every complete proper $(N-1,k,d)$-graph $G$, the variety $\calC_G$ is an absolutely irreducible curve over $\Fp$ with degree at most $d^{(k-1)(N-1)}$.
\end{proposition}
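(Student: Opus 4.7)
The plan is to first use Lemma~\ref{construction} to find an $(N-1,k,d)$-tree $G_0$ whose maximal extension is $G$. Since each step of Definition~\ref{inductive definition} replaces two edge equations of the form $\Phi(X_a,X_b,X_0;\cdot,\cdot)=0$ and $\Phi(X_b,X_c,X_0;\cdot,\cdot)=0$ by an algebraic consequence $\Phi(X_a,X_c,X_0;\cdot,\cdot)=0$, every edge equation of $G$ is already implied by the edge equations of $G_0$, giving $\calC_{G}=\calC_{G_0}$. It therefore suffices to prove the statement for $\calC_{G_0}$, where $G_0$ is a tree on $k$ vertices with exactly $k-1$ edges.

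For the degree bound, each edge $\overline{ab}\in\calE_{G_0}$ contributes a single homogeneous equation of total degree $d^{\xi_{G_0}(a,b)}\le d^{N-1}$ (or degree $1$ when $\xi_{G_0}(a,b)=-1$). Provided the $k-1$ edge equations cut out a subvariety of the expected codimension $k-1$ in $\mathbb{P}^k$ (a fact that comes out of the inductive argument below), Bezout's theorem gives $\deg\calC_{G_0}\le \prod_{e\in\calE_{G_0}}d^{\xi_{G_0}(e)}\le d^{(k-1)(N-1)}$.

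For absolute irreducibility I would induct on $k$. The cases $k=1$ and $k=2$ are immediate (the latter directly from Lemma~\ref{first lemma}). For the inductive step, pick a leaf $v$ of $G_0$ with incident edge $\overline{av}$, set $\ell:=\xi_{G_0}(a,v)$ and $h:=\eta_{G_0}(a,v)$, and let $G_0'$ be the tree $G_0\setminus\{v\}$ on $k-1$ vertices. By the induction hypothesis, $\calC_{G_0'}\subset\mathbb{P}^{k-1}$ is an absolutely irreducible curve. The forgetful projection $\pi:\calC_{G_0}\to\calC_{G_0'}$ that erases $X_v$ is finite, with generic fiber cut out by $\Phi(X_a,X_v,X_0;\ell,h)=0$ viewed as a polynomial of degree $\max(1,d^\ell)$ in $X_v$ alone; this simultaneously forces $\dim\calC_{G_0}=1$ and justifies the Bezout step above.

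The main obstacle is showing that the cover $\pi$ is absolutely irreducible, equivalently that $\Phi(X_a,X_v,X_0;\ell,h)$ is irreducible as a polynomial in $X_v$ over the function field $K:=\overline{\FF_p}(\calC_{G_0'})$. Lemma~\ref{first lemma} delivers absolute irreducibility of $\Phi$ in the three free variables $X_a,X_v,X_0$, but specialising $X_a/X_0$ to its image in $K$ could in principle introduce new factors. My plan is to choose the leaf $v$ so that $\ell$ is maximal among the $\xi$-values occurring on edges of $G_0$, then verify by a separate induction (tracing the tree $G_0'$ back from $a$ using the potentially-complete property of Definition~\ref{def:tree}) that $X_a/X_0$ is a non-constant rational function on $\calC_{G_0'}$, so that $K$ is a finite algebraic extension of $\overline{\FF_p}(X_a/X_0)$. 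A Bertini/monodromy-style argument then shows that the $d^\ell$ roots of $\Phi(X_a,X_v,X_0;\ell,h)=0$ in $X_v$ are permuted transitively by the monodromy of $X_a/X_0:\calC_{G_0'}\to\mathbb{A}^1$ over a generic point, so the pullback to $\calC_{G_0'}$ remains irreducible. This last step, which leans essentially on both Lemma~\ref{first lemma} and hypothesis~\eqref{precondtion}, is the technical heart of the proposition.
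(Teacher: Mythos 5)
Your reduction to the tree case is exactly the paper's route (Lemmas~\ref{construction} and \ref{complete and tree}), but your treatment of the tree case has a genuine gap at precisely the point you flag as ``the technical heart.'' You need the pullback of the degree-$d^{\ell}$ cover $\{\Phi(X_a,X_v,X_0;\ell,h)=0\}\to\mathbb{A}^1_{X_a}$ along the map $X_a/X_0\colon\calC_{G_0'}\to\mathbb{A}^1$ to stay irreducible. Transitivity of the monodromy of $\Phi\to\mathbb{A}^1$ (which is what Lemma~\ref{first lemma} gives you) does \emph{not} suffice: irreducibility of the fiber product requires that the \emph{image} of $\pi_1$ of (an open part of) $\calC_{G_0'}$ inside the monodromy group over the $X_a$-line still act transitively on the $d^{\ell}$ roots, and that image is in general a proper subgroup. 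Fiber products of absolutely irreducible covers of $\mathbb{A}^1$ are routinely reducible --- e.g.\ two copies of the same cover always produce a diagonal component. In your setting the dangerous configurations (such as two leaf edges at the same vertex carrying the same $(\xi,\eta)$) are excluded only by condition (b) of Definition~\ref{def:tree}, and you give no mechanism by which the potentially-complete condition and hypothesis~\eqref{precondtion} would be converted into the required transitivity statement; your proposal uses them only to argue that $X_a/X_0$ is non-constant, which is far weaker. Your Bezout degree bound is likewise conditional on the codimension claim you defer to this same unproven step.

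The paper avoids the monodromy question entirely: Lemma~\ref{key emma} shows by a direct computation that the $k-1$ gradients $\nabla\Phi(x_a,x_b,x_0;\xi_{G_0}(a,b),\eta_{G_0}(a,b))$ are linearly independent at \emph{every} point of $\calC_{G_0}$, i.e.\ that $\calC_{G_0}$ is a nonsingular complete intersection. There a putative linear dependence is traced along a maximal chain of the tree, the potential completeness forces two relations $f^{\circ j_0}(x_{a_0})=0$ and $f^{\circ j_s}(x_{a_s})=0$ at the two ends, and these combine into an identity $f^{\circ(L+1)}(0)=f^{\circ(L-j_s+j_0+1)}(0)$ contradicting \eqref{precondtion}; this is where the hypothesis actually does its work. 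Absolute irreducibility and the degree bound $d^{(k-1)(N-1)}$ then follow in one stroke from the cited fact that a nonsingular complete intersection is absolutely irreducible of degree equal to the product of the degrees of its defining forms. If you want to salvage your leaf-removal induction you would have to prove the transitivity of the restricted monodromy group, which looks at least as hard as the Jacobian computation it would replace.
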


We will give its proof after several lemmas.

\begin{lemma}\label{complete and tree}
	Let $G$ be a complete proper  $(r,k,d)$-graph and $G_0$ be an $(r,k,d)$-tree which generates $G$. Then we have $G$ and $G_0$ correspond the same projective variety.
\end{lemma}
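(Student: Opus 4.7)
The plan is to prove the set-theoretic (indeed scheme-theoretic) equality $\calC_G = \calC_{G_0}$ by showing both inclusions. Since $G_0$ generates $G$ through a chain $G_0, G_1, \dots, G_s = G$ of elementary generation steps, every edge of $G_0$ remains an edge of $G$ with the \emph{same} values of $\xi$ and $\eta$ (no step modifies a pre-existing edge). Hence $\calC_{G_0}$ is defined by a subset of the equations defining $\calC_G$, which gives the easy inclusion $\calC_G \subseteq \calC_{G_0}$.

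For the reverse inclusion I would induct on the length $s$ of the generation chain. The inductive step reduces to the following claim: if $G_{h+1}$ is obtained from the proper $(r,k,d)$-graph $G_h$ by adjoining one edge $\overline{ac}$ through a triangle $a,b,c$ as in Definition~\ref{inductive definition}(1), then every point of $\calC_{G_h}$ already satisfies the new equation
$$\Phi\bigl(X_a,X_c,X_0;\xi_{G_{h+1}}(a,c),\eta_{G_{h+1}}(a,c)\bigr)=0.$$
This is proved case by case using the explicit homogeneous form $\Phi(X,Y,X_0;\ell,h)=F^{\circ\ell}(X,X_0)-\gamma^h F^{\circ\ell}(Y,X_0)$ for $\ell\ge 0$. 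Case (i') is the transitivity $X_a=X_b$ and $X_b=X_c\Rightarrow X_a=X_c$. Case (ii') is a straight composition: at a common level $\ell$ one multiplies $F^{\circ\ell}(X_a,X_0)=\gamma^{\eta(a,b)}F^{\circ\ell}(X_b,X_0)$ and $F^{\circ\ell}(X_b,X_0)=\gamma^{\eta(b,c)}F^{\circ\ell}(X_c,X_0)$ to recover the required relation with exponent $\eta(a,b)+\eta(b,c)$ mod $d$, matching the prescription in (ii').

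The main (and only substantive) obstacle is case (iii'), where $\xi(a,b)<\xi(b,c)$. Here the key observation is the projective recursion
$$F^{\circ(\ell+1)}(X,X_0)=A\,F^{\circ\ell}(X,X_0)^d+C\,X_0^{d^{\ell+1}},$$
combined with the identity $\gamma^d=1$, which is precisely where the standing hypothesis $d\mid p-1$ is used. If $F^{\circ\ell}(X_a,X_0)=\gamma^{\eta(a,b)}F^{\circ\ell}(X_b,X_0)$ then raising to the $d$-th power kills the factor $\gamma^{d\,\eta(a,b)}=1$ and yields $F^{\circ(\ell+1)}(X_a,X_0)=F^{\circ(\ell+1)}(X_b,X_0)$; iterating upward shows $F^{\circ\ell'}(X_a,X_0)=F^{\circ\ell'}(X_b,X_0)$ for every $\ell'>\xi(a,b)$, and in particular at $\ell'=\xi(b,c)$. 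Plugging this into the $\overline{bc}$-equation gives the desired $\overline{ac}$-equation with $\xi=\xi(b,c)$ and $\eta=\eta(b,c)$, exactly as prescribed by (iii'). The case $\xi(a,b)=-1$ in (iii') is subsumed because $X_a=X_b$ trivially implies $F^{\circ\ell}(X_a,X_0)=F^{\circ\ell}(X_b,X_0)$ for all $\ell\ge 0$. Assembling the three cases and running the induction completes the proof.
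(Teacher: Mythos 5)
Your proof is correct and follows essentially the same route as the paper: both reduce to checking, triangle by triangle, that the two existing edge equations imply the newly added one in each of the cases (i)--(iii) of Definition~\ref{inductive definition}, with the crucial use of $\gamma^d=1$ (i.e.\ $f^{\circ m}(\gamma^h y)=f^{\circ m}(y)$) to raise the level in case (iii). Your version merely makes explicit the trivial inclusion $\calC_G\subseteq\calC_{G_0}$ and the induction along the generation chain, and works with the homogeneous forms $F^{\circ\ell}(X,X_0)$ where the paper argues affinely; these are presentational, not substantive, differences.
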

\begin{proof}
	It is enough to show that $$\Phi(X_a,X_b,X_0;\xi_G(a,b),\eta_G(a,b))=0\quad \textrm{and}\quad \Phi(X_b,X_c,X_0;\xi_G(b,c),\eta_G(b,c))=0$$
	imply $\Phi(X_a,X_c,X_0;\xi_G(a,c),\eta_G(a,c))=0$ whenever $a$, $b$ and $c$ satisfy  one of the three cases in Definition~\ref{inductive definition}(1).
	
	For the case (i), we know that $X_a=X_b$ and $X_b=X_c$, which imply $X_a=X_c$.
	
	For the case (ii), we put $$\ell:=\xi_G(a,b)=\xi_G(b,c),\ h_1:=\eta_G(a,b) \textrm{~and~}  h_2:=\eta_G(b,c).$$ Then we have  
	$$f^{\circ \ell}(X_a)-\gamma^{h_1}f^{\circ \ell}(X_b)=0\quad\textrm{and}\quad f^{\circ \ell}(X_b)-\gamma^{h_2}f^{\circ \ell}(X_c)=0.$$
which imply \begin{equation}\label{2}
	f^{\circ \ell}(X_a)-\gamma^{h_1+h_2}f^{\circ \ell}(X_c)=0.
	\end{equation} Since $\gamma$ is a primitive $d$-th root of unity, the equality~\eqref{2} is exactly what $\xi_G(a,c)=\ell$ and $\eta_G(a,c)\equiv h_1+h_2\pmod d$ imply.
	
For the case (iii), we put $$\ell_1:=\xi_G(a,b),\ \ell_2:=\eta_G(b,c),\ h_1:=\eta_G(a,b)\textrm{~and~}h_2:=\eta_G(b,c).$$ Then we have \begin{equation}\label{11}
	f^{\circ \ell_1}(X_a)=\gamma^{h_1}f^{\circ \ell_1}(X_b)
	\end{equation} and 
	\begin{equation}\label{12}
	f^{\circ \ell_2}(X_b)-\gamma^{h_2}f^{\circ \ell_2}(X_c)=0.
	\end{equation}
	
	Consider the condition $\ell_1<\ell_2$ in (iii). We act $f^{\circ (\ell_2-\ell_1)}$ on the both sides of \eqref{11} and obtain $$f^{\circ \ell_2}(X_a)=f^{\circ \ell_2}(X_b).$$
	Combined with \eqref{12}, this implies $$f^{\circ \ell_2}(X_a)-\gamma^{h_2}f^{\circ \ell_2}(X_c)=0,$$ which is exactly the equality obtained from $\xi_G(a,c)=\ell_2$ and $\eta_G(a,c)=h_2$.
\end{proof}

%
%

\begin{lemma}\label{key emma}
	With the assumption~\eqref{precondtion}, the variety $\calC_{G_0}$ associated to an $(N-1,k,d)$-tree $G_0$ is a nonsingular complete intersection. Hence, $\calC_{G_0}$ is an absolutely irreducible curve over $\Fp$, with degree at most $d^{(k-1)(N-1)}$.
\end{lemma}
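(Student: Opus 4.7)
The plan is to prove the lemma by induction on $k$, establishing smoothness through a careful Jacobian analysis exploiting the tree structure, and then invoking the standard theorems that a smooth complete intersection in projective space is connected (hence absolutely irreducible) and Bezout (for the degree).

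The base case $k = 1$ is trivial ($\calC_{G_0} = \PP^1$). For $k \geq 2$, pick a leaf $v$ of $G_0$ with unique neighbor $u$; write $e = \overline{uv}$ with $\ell := \xi_{G_0}(u,v)$ and $h := \eta_{G_0}(u,v)$, and set $G_0' := G_0 \setminus \{v\}$. By induction $\calC_{G_0'} \subset \PP^{k-1}$ is a smooth absolutely irreducible curve whose defining Jacobian $J'$ has rank $k-2$ at every point. In the affine chart $X_0 = 1$, at a point $P = (x_1, \ldots, x_k) \in \calC_{G_0}$ the Jacobian takes the block form
$$
J = \begin{pmatrix} J' & 0 \\ R & \alpha \end{pmatrix},
$$
where $\alpha = -\gamma^h (f^{\circ \ell})'(x_v)$ and $R$ is the row whose only possibly nonzero entry is $(f^{\circ \ell})'(x_u)$ at column $u$. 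If $\alpha \ne 0$ then $\rank J = k-1$ is immediate. If both $\alpha = 0$ and $(f^{\circ \ell})'(x_u) = 0$, then $f^{\circ j}(x_v) = f^{\circ j'}(x_u) = 0$ for some $0 \le j, j' < \ell$; combining with $\Phi_e(P) = 0$, applying $f$ once, and using $\gamma^d = 1$ produces $f^{\circ (\ell+1-j)}(0) = f^{\circ (\ell+1-j')}(0)$ when $j \ne j'$, or $(1 - \gamma^h) f^{\circ (\ell-j)}(0) = 0$ when $j = j'$; since $\gamma^h \ne 1$, both conclusions contradict \eqref{precondtion}. This is the mechanism of Lemma~\ref{first lemma}.

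The main obstacle is the remaining sub-case: $\alpha = 0$ but $(f^{\circ \ell})'(x_u) \ne 0$. Here $\rank J = k-1$ reduces to $R \notin \mathrm{rowspan}(J')$, equivalently to $x_u$ having nonzero differential on $\calC_{G_0'}$ at the projected point. Assuming it vanishes yields $\sum_{e' \in S} c_{e'} \nabla \Phi_{e'} = R$ for a nontrivial subforest $S \subseteq \calE_{G_0'}$; the $u$-column of the right-hand side being nonzero forces $u$ to have an incident $S$-edge, so the connected component $S_u$ of $S$ containing $u$ is nontrivial and possesses a leaf $w \ne u$. The column-$w$ equation then gives $(f^{\circ \ell_{e_w}})'(x_w) = 0$, so $f^{\circ j_w}(x_w) = 0$ for some $0 \le j_w < \ell_{e_w}$. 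Because $G_0$ is a tree, the path from $v$ to $w$ is potentially complete (Definition~\ref{def:tree}), hence its $\xi$-values are unimodal with some peak value $\ell^*$; I propagate $f^{\circ j}(x_v) = 0$ forward along the non-decreasing segment and $f^{\circ j_w}(x_w) = 0$ backward along the non-increasing segment. At each intermediate vertex the identity $f(\gamma^a z) = f(z)$ (from $\gamma^d = 1$) kills the accumulating $\gamma$-twists, yielding clean expressions $f^{\circ m}(x_{a_i}) = f^{\circ (m-j)}(0)$ (respectively $f^{\circ (m-j_w)}(0)$) valid once $m$ exceeds the relevant $\xi$-value. Comparing the two propagated expressions at the peak, the condition~(b) of potentially complete chains ensures the resulting $\gamma$-twist is nontrivial, and one obtains either $f^{\circ (\ell^*+1-j)}(0) = f^{\circ (\ell^*+1-j_w)}(0)$ when $j \ne j_w$ or $f^{\circ (\ell^*-j)}(0) = 0$ when $j = j_w$; since $\ell^* \le N-1$, both contradict \eqref{precondtion}.

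Finally, the points at $X_0 = 0$ are handled separately using that the top-degree part of $\Phi(X_a, X_b, X_0; \xi_e, \eta_e)$ is the nonvanishing form $A^{(d^{\xi_e}-1)/(d-1)}(X_a^{d^{\xi_e}} - \gamma^{\eta_e} X_b^{d^{\xi_e}})$ at the cone point, so smoothness at infinity follows in a standard way. Combining, $\calC_{G_0}$ is a smooth complete intersection of dimension one in $\PP^k$, hence connected by the standard connectedness result for smooth complete intersections in projective space, hence absolutely irreducible. The degree bound $\deg \calC_{G_0} \le d^{(k-1)(N-1)}$ is then immediate from Bezout since each $\Phi_e$ has degree $d^{\xi_e} \le d^{N-1}$ and there are $k-1$ of them.
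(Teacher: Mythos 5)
Your proposal is correct, and its engine is the same as the paper's: from a hypothetical linear dependence among the gradients $\nabla\Phi_e$ one extracts two vertices at which $(f^{\circ j})'$ vanishes, joined by a potentially complete chain of $G_0$ all of whose $\xi$-values are nonnegative; the unimodal $\xi$-profile plus the non-cancelling $\eta$'s at the peak then yield a collision $f^{\circ i}(0)=f^{\circ j}(0)$ contradicting \eqref{precondtion}. The packaging differs, though. The paper runs a single global argument: it takes a \emph{maximal} chain in the support $\{c_{ab}\neq 0\}$ of the dependence, and maximality at once forces the partial derivatives at both endpoints to vanish, so there is only the peak dichotomy to handle; the locus $x_0=0$ is excluded inside the same argument by propagating $x_a=0$ along the connected graph. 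Your leaf-peeling induction buys a transparent block-triangular Jacobian and disposes of the generic case ($\alpha\neq 0$) instantly, but it costs you the extra sub-case $R\in\mathrm{rowspan}(J')$, which you resolve by essentially re-deriving the paper's maximal-chain step (a leaf $w$ of the support component of $u$, with $e_w$ being the last edge of the tree path from $v$ to $w$ precisely because that component is a connected subgraph of the tree $G_0$ -- a point worth stating explicitly, since it is what aligns the two endpoint vanishing conditions with the two ends of a potentially complete chain). The only place you are materially terser than the paper is at $X_0=0$: rather than appealing to ``the standard way,'' note that at a point at infinity connectivity of $G_0$ forces every $x_a\neq 0$, whence the leaf entry $\alpha$ reduces to a nonzero constant times $x_v^{d^{\ell}-1}\neq 0$ and only your easy case occurs. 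Both proofs then finish identically, via absolute irreducibility of smooth complete intersections and the product-of-degrees bound.
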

\begin{proof}
	To prove that $\calC_{G_0}$ is a nonsingular complete intersection we need to show that the vectors in the set $\{\nabla\Phi(x_a, x_b, x_0; \xi(a,b),\eta(a,b))\}_{\overline{ab}\in \calE_{G_0}}$ are linearly independent at every point $\underline x$ of $\calC_{G_0}$. Suppose to the contrary that
	\begin{equation}\label{linear}
	\sum_{\overline{ab}\in \calE_{G_0}}c_{ab}\nabla\Phi(x_a, x_b, x_0; \xi_{G_0}(a,b),\eta_{G_0}(a,b)) = \underline 0
	\end{equation}
	for some $\underline x\in \calC_{G_0}$ and some non-zero vector $\underline{c}\in \overline{\FF}_p^{d-1}.$

	We put $c_{ba}:=-\gamma^{\eta(a,b)}c_{ab}$. By \eqref{eq:ab}, we can freely swap $a$ and $b$ in \eqref{linear} without changing this equality.

	We put $G'$ to be the subgraph of $G_0$ consisting of $\overline{ab}\in \calC_{G_0}$ such that $c_{ab}\neq 0$. Let $\CH=\{\overline{a_ia_{i+1}}\;|\; 0\leq i\leq s-1, s\geq 1\}$ be an arbitrary maximal chain in $G'$. (Here ``maximal'' means that the chain cannot be extended further in $G'$, which is not necessary to be the longest). Clearly, $\CH$ satisfies that
		\begin{enumerate}
		\item $c_{a_ia_{i+1}}\neq 0$ for every $0\leq i\leq s-1$.
		\item There is no vertex $b\neq {a_1}$ such that $\overline{ba_0}\in \calE_{G_0}$ and $c_{a_0b}\neq 0$.
		\item There is no vertex ${b'}\neq {a_{s-1}}$ such that $\overline{b'a_s}\in \calE_{G_0}$ and $c_{a_sb'}\neq 0$.
	\end{enumerate}
		Moreover, since $G_0$ is an $(N-1,k,d)$-tree,   $\CH$ is potentially complete.

We put $L:=\max\limits_{0\leq i\leq s-1}\{\xi_{G_0}(a_i,a_{i+1}) \}$. 
From the property (2) of $\CH$, we have \begin{equation}\label{eq:2}
	(\partial/\partial x_{a_0})\Phi(x_{a_0}, x_{a_1}, x_0; \xi_{G_0}(a_0,a_1),\eta_{G_0}(a_0,a_1))=0,
	\end{equation}
	which forces \begin{equation}\label{xi}
	\xi_{G_0}(a_0,a_1)\geq 0,
	\end{equation} since otherwise we have 	
	\begin{multline*}
	(\partial/\partial x_{a_0}	)\left(\sum_{\overline{ab}\in \calE_{G_0}}c_{ab}\nabla\Phi(x_a, x_b, x_0; \xi_{G_0}(a,b),\eta_{G_0}(a,b))\right)\\
	=c_{a_0a_1}(\partial/\partial x_{a_0})\Phi(x_{a_0}, x_{a_1}, x_0; \xi_{G_0}(a_0,a_1),\eta_{G_0}(a_0,a_1))\\
	=c_{a_0a_1}(\partial/\partial x_{a_0})(x_{a_0}-x_{a_1})=c_{a_0a_1}\neq 0.
	\end{multline*}
	
	From \eqref{xi}, we can write \eqref{eq:2} explicitly as $$(Ad)^{\xi_{G_0}(a_0,a_1)}  \prod_{i=0}^{\xi_{G_0}(a_0,a_1)-1}(F^{\circ i}(x_{a_0},x_0))^{d-1}
	=0,$$
which implies  $F^{\circ j_0}(x_{a_0},x_0) = 0$ for some index $0 \leq j_0 \leq  \xi_{G_0}(a_0,a_1) - 1\leq L-1$.

Similarly, from the property (3) of $\CH$, we have $$\xi_{G_0}(a_{s-1},a_s)\geq 0,$$ which implies that there exists $0\leq j_s \leq  L-1$ such that $F^{\circ j_s}(x_{a_s},x_0) = 0$.

Since $\CH$  is potentially complete in $ G_0$, $\xi_{G_0}(a_0,a_{1})\geq 0$ and $\xi_{G_0}(a_{s-1},a_{s})\geq 0$ imply $\xi_{G_0}(a_i,a_{i+1})\geq 0$ for all $1\leq i\leq s-1$. 

	We next show that $x_0$ cannot vanish. If, on the contrary, we had $x_0 = 0$, then the relation $F^{\circ i}(x_{a_0},x_0) = 0$ would yield $x_{a_0} = 0.$ In general, if $x_{a_0} = x_0 = 0,$ then for any vertex ${a'}$ such that $\overline{a_0a'}\in \calE_{G_0}$, the
relation $$\Phi(x_{a_0},x_{a'}, x_0; \xi_{G_0}(a_0,a'), \eta_{G_0}(a_0,a')) = 0$$ implies $x_{a'}= 0.$  Since $G_0$ is connected, we have $x_a=0$ for all $a\in \calV_{G_0}$, which is impossible.

We may therefore assume that $x_0 = 1$, which takes us back to the affine situation, i.e.
\begin{equation}\label{eq:3}
 f^{\circ \xi_{G_0}(a_i,a_{i+1})}(x_{a_i})=\gamma^{\eta_{G_0}(a_i,a_{i+1})}f^{\circ \xi_{G_0}(a_i,a_{i+1})}(x_{a_{i+1}}) \textrm{~for every~}0\leq i\leq s-1,
\end{equation} 
and 
\begin{equation}\label{eq:1}
	f^{\circ j_0}(x_{a_0})=0 \textrm{~and~} f^{\circ j_s}(x_{a_s})=0 \textrm{~with~} 0\leq j_0\leq L-1\ \textrm{and}\ 0\leq  j_s \leq L-1.
\end{equation}

Based on the number of indices $0\leq i\leq s-1$ such that $ \xi_{G_0}(a_i,a_{i+1})=L$, we have the following two cases.

\noindent\textbf{Case I.} When there is a unique index $u$ in $\{0,\dots, s-1\}$ such that $ \xi_{G_0}(a_u,a_{u+1})=L$. Combined with \eqref{xi}, this shows that
for every $0\leq i\leq u-1$ we have $0\leq \xi_{G_0}(a_i,a_{i+1})\leq L-1$. Together with \eqref{eq:3}, this implies  $f^{\circ L}(x_{a_i})=f^{\circ L}(x_{a_{i+1}})$,
and hence 
\begin{equation}\label{eq:4}
f^{\circ L}(x_{a_0})=f^{\circ L}(x_{a_{u}}).
\end{equation}

Similarly, we have 
$f^{\circ L}(x_{a_{s}})=f^{\circ L}(x_{a_{u+1}}).$
Combining it with \eqref{eq:3} for $i=u$ and \eqref{eq:4}, we have
$$f^{\circ L}(x_{a_0})=\gamma^{\eta_{G_0}(a_u,a_{u+1})}f^{\circ L}(x_{a_{s}}).$$
Together with \eqref{eq:1}, this equality implies 
\begin{equation}\label{eq:5}
f^{\circ (L-j_0)}(0)=\gamma^{\eta_{G_0}(a_u,a_{u+1})}f^{\circ (L-j_s)}(0).
\end{equation}

If $j_0=j_s$, since  $\gamma^{\eta_{G_0}(a_u,a_{u+1})}\neq 1$, we have $f^{\circ (L-j_0)}(0)=0$. Combined with $L\leq N-1$, this leads to a contradiction to our assumption~\eqref{precondtion}.

Now assume $j_0\neq j_s$. Without loss of generality, we assume $j_0<j_s$. From \eqref{eq:5}, we have 
$f^{\circ (L-j_0+1)}(0)=f^{\circ (L-j_s+1)}(0),$
and hence $f^{\circ ( L+1)}(0)=f^{\circ (L-j_s+j_0+1)}(0),$
which contradicts our assumption~\eqref{precondtion}.

\noindent\textbf{Case II.} When there is an index $u$ in $\{0,\dots, s-2\}$ such that $$ \xi_{G_0}(a_u,a_{u+1})=\xi_{G_0}(a_{u+1},a_{u+2})=L,$$ and for every $i\notin\{u,u+1\}$ we have 
$0\leq \xi_{G_0}(a_i,a_{i+1})\leq L-1$.

Since $\left\{\overline{a_ia_{i+1}}\right\}_{i=0}^{s-1}$ is potentially complete in $ G_0$, we have $$\ell:=\eta_{G_0}(a_u,a_{u+1})+\eta_{G_0}(a_{u+1},a_{u+2})\not\equiv 0\pmod d.$$ 

From \eqref{eq:3}, we have 
\begin{equation*}
f^{\circ L}(x_{a_u})=\gamma^{\eta_{G_0}(a_u,a_{u+1})}f^{\circ L}(x_{a_{u+1}})\textrm{~and~} f^{\circ L}(x_{a_{u+1}})=\gamma^{\eta_{G_0}(a_{u+1},a_{u+2})}f^{\circ L}(x_{a_{u+2}}),
\end{equation*}
which implies
$$f^{\circ L}(x_{a_u})=\gamma^{\ell}f^{\circ L}(x_{a_{u+2}}).
$$

Similar to \textbf{Case I}, we have
\begin{equation}\label{eq:6}
f^{\circ (L-j_0)}(0)=\gamma^{\ell}f^{\circ (L-j_s)}(0).
\end{equation}

If $j_0=j_s$, since  $\gamma^{\ell}\neq 1$, we have $f^{\circ (L-j_0)}(0)=0$. Combined with $L\leq N-1$, this leads to a contradiction to our assumption~\eqref{precondtion}.

Now assume $j_0\neq j_s$. Without loss of generality, we assume $j_0< j_s$. From \eqref{eq:6}, we have 
$f^{\circ (L-j_0+1)}(0)=f^{\circ (L-j_s+1)}(0),$
and hence $f^{\circ ( L+1)}(0)=f^{\circ (L-j_s+j_0+1)}(0),$
which contradicts our assumption~\eqref{precondtion}.

Therefore, there is no non-trivial solution $\underline c$ to the system \eqref{linear}.

%
%
%
%
%
%
%
%
%
%
%

%
%

In general, a nonsingular complete intersection is necessarily absolutely irreducible,  with the codimension equal to the number of equations in the system and degree equal to the product of the degrees of the defining forms, see \cite[Lemma~3.2]{BB} for details. In our case, $\Phi(X_a, X_b, X_0; \xi_G(a,b),\eta_G(a,b))$ has degree at most $d^{N-1}$ and the system \eqref{eq:CG0} for $\calC_{G_0}$ has $k-1$ equations. Combining them, we complete the proof.
\end{proof}
\begin{proof}[Proof of the Proposition~\ref{key proposition}]
	It follows directly from Lemmas~\ref{construction}, \ref{complete and tree} and \ref{key emma}.
\end{proof}
\section{Counting Points and Counting Curves}\label{s3}
By Lemma~\ref{decomposition} and the inclusion-exclusion principle, we have
$$\sum_{G}\#\calC_G(\Fp)- \sum_{G_1\neq G_2}\#(\calC_{G_1} \cap \calC_{G_2})(\Fp)\leq \calW(r,k)+(p-1)\gcd(p-1,d^r)^{k-2}\leq \sum_{G}\#\calC_G(\Fp),$$
where $G$ runs over all distinct complete proper  $(r-1,k,d)$-graphs.

Let $\calU(r,k)$ be the number of distinct complete proper $(r,k,d)$-graphs.
 By convention, $\calU(r, 0) = 1$ for all $r\geq -1$.


Combining Lemma~\ref{dif} and Proposition~\ref{key proposition} with Bezout's Theorem, for every two distinct complete proper $(N-1,k,d)$-graphs $G_1$ and $G_2$, we have $$\#(\calC_{G_1} \cap \calC_{G_2})(\Fp)\leq d^{2kN}.$$

 Therefore, we have
 \begin{equation}\label{eq}
 \Big|\calW(N,k)+(p-1)\gcd(p-1,d^N)^{k-2}-\sum_{G} \#\calC_G(\Fp)\Big|\leq \calU(N-1,k)^2 d^{2kN}.
 \end{equation}

By Weil's ``Riemann
Hypothesis'', every absolutely irreducible projective curve $\calC$ defined over $\Fp$ satisfies
$$|\#\calC(\Fp)- (p + 1)| \leq 2g \sqrt p,$$
where $g$ is the genus of $ \calC$. In general, if $\calC$ is an irreducible non-degenerate curve of degree $D$ in $\PP_k$ (with $k \geq  2$), then according to the Castelnuovo genus bound \cite{GC}, one has
$$g \leq  (k-1)m(m-1)/2 + m\epsilon,$$
where $D-1=m(k-1)+\epsilon$ with $0\leq \epsilon<k-1$. This implies that $g \leq (D- 1)(D-2)/2$ irrespective of the degree of the ambient space in which $\calC$ lies. Hence, we have
\begin{equation}\label{eq2}
|\#\calC_G(\Fp) -(p+ 1)| \leq d^{2kN}\sqrt{p},
\end{equation}
since $\calC_G$ has degree at most $d^{kN}.$

Combining \eqref{eq} and \eqref{eq2}, we have 
\begin{multline}\label{est}
\Big|\calW(N, k)+(p-1)\gcd(p-1,d^N)^{k-2}- \calU (N-1, k)(p+ 1) \Big| \\ 
\leq \calU(N-1,k)^2 d^{2kN}+\calU(N-1,k)d^{2kN}\sqrt{p}.
\end{multline}
\begin{notation}
	For every $k\geq 1$ let $M_{k,t}$ be the set of partitions of $\{1,\dots,k\}$ of $t$ components, where $\{A_i\}$ and $\{B_i\}$ are treated as the same partition if there is a permutation $\sigma\in S_t$ such that $A_i=B_{\sigma(i)}$ for all $1\leq i\leq t$.
\end{notation}
\begin{definition}
	Recall that every complete proper strict $(r,k,d)$-graph $G$ with $r \geq 0$ corresponds a partition $\{A_i\}_{i=1}^{t}$ of $\{1, \dots , k\}$ as in Lemma~\ref{div} for some $1\leq t\leq d$. We call $G$ a \emph{$(\{A_i\},r)$-graph}, and denote the set by $M(\{A_i, r\})$.
\end{definition}

\begin{lemma}
	Let $k\geq 1$ and $1\leq t\leq d$. For every $\{A_i\}\in M_{k,t}$,  we have
	$$\#M(\{A_i\}, r)=\frac{(d-1)!}{(d-t)!}\prod_{i=1}^t\calU(r-1,|A_i|).$$
\end{lemma}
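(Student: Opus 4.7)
The plan is to set up an explicit bijection
\[
M(\{A_i\}, r) \;\longleftrightarrow\; \Bigl(\prod_{i=1}^t U_i\Bigr) \times \mathcal{I},
\]
where $U_i$ is the set of complete proper $(r-1,|A_i|,d)$-graphs on vertex set $A_i$ (of cardinality $\calU(r-1,|A_i|)$) and $\mathcal{I}$ is the set of injections $\{2,\ldots,t\}\hookrightarrow\{1,\ldots,d-1\}$ (of cardinality $(d-1)(d-2)\cdots(d-t+1)=(d-1)!/(d-t)!$). The claimed count then follows by multiplication. I focus on the main case $t\ge 2$; the boundary case $t=1$ is degenerate and can be treated by convention.

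For the forward map, given $G\in M(\{A_i\},r)$, the restriction $G|_{A_i}$ is a complete proper $(r-1,|A_i|,d)$-graph on $A_i$ by Lemma~\ref{div}(i). To extract the coloring, fix a base vertex $a_0\in A_1$ and define $\lambda\colon\{2,\ldots,t\}\to\{1,\ldots,d-1\}$ by $\lambda(j):=\eta_G(a_0,b)$ for any $b\in A_j$. I would verify that $\lambda(j)$ is independent of the choice of $b\in A_j$ by applying Definition~\ref{proper}(2) to the triple $(b',b,a_0)$: since $\xi_G(b',b)<r=\xi_G(b,a_0)$, one gets $\eta_G(b',a_0)=\eta_G(b,a_0)$. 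The values lie in $\{1,\ldots,d-1\}$ because $\xi_G(a_0,b)=r\ge 0$. Injectivity of $\lambda$ is the key point: if $\lambda(j)=\lambda(j')$ for distinct $j,j'$ with $b\in A_j$, $b'\in A_{j'}$, then $\eta_G(b,a_0)+\eta_G(a_0,b')=(d-\lambda(j))+\lambda(j')=d$, and Definition~\ref{proper}(4) would force $\xi_G(b,b')<r$, contradicting Lemma~\ref{div}(ii).

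For the inverse map, given restrictions $G_i\in U_i$ and $\lambda\in\mathcal{I}$, extend by setting $\lambda(1):=0$, glue the $G_i$'s along their vertex sets, and declare $\xi_G(a,b):=r$ together with $\eta_G(a,b)\equiv\lambda(j)-\lambda(i)\pmod d$ for $a\in A_i$, $b\in A_j$ with $i\ne j$. Injectivity of the extended $\lambda$ on $\{1,\ldots,t\}$ ensures $\eta_G(a,b)\in\{1,\ldots,d-1\}$. Strictness is immediate since $t\ge 2$ produces a cross-edge with $\xi=r$, and the canonical partition of the constructed $G$ from Lemma~\ref{div} is manifestly $\{A_i\}$.

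The main obstacle is verifying that this constructed $G$ is proper; I would check Definition~\ref{proper} on each triple $\{a,b,c\}$ of distinct vertices by block membership. If $a,b,c\in A_i$, properness of $G_i$ handles it. If $a,b\in A_i$ and $c\in A_j$ with $i\ne j$, then $\xi_G(a,b)<r=\xi_G(b,c)=\xi_G(a,c)$, and the requirement $\eta_G(a,c)=\eta_G(b,c)$ of case (2) holds because both sides equal $\lambda(j)-\lambda(i)$. If $a,b,c$ lie in three distinct blocks $A_i,A_j,A_\ell$, all three pairwise $\xi$'s equal $r$, and
\[
\eta_G(a,b)+\eta_G(b,c)\equiv(\lambda(j)-\lambda(i))+(\lambda(\ell)-\lambda(j))\equiv\lambda(\ell)-\lambda(i)\equiv\eta_G(a,c)\pmod d,
\]
which is nonzero by injectivity of $\lambda$; this is precisely case (3). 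Finally, I would check that the two maps are mutually inverse: applying the forward construction to a $G$ built from data $(\{G_i\},\lambda)$ recovers $(\{G_i\},\lambda)$ because $\eta_G(a_0,b)=\lambda(j)-\lambda(1)=\lambda(j)$ by construction, completing the bijection.
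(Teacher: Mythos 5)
Your proof is correct and follows essentially the same counting argument as the paper: a $(\{A_i\},r)$-graph is determined by its restrictions to the blocks $A_i$ together with an injective assignment of $\eta$-values from a fixed base vertex to the other blocks, giving the factor $(d-1)!/(d-t)!$ times $\prod_i\calU(r-1,|A_i|)$. You supply considerably more detail than the paper does (well-definedness and injectivity of the coloring via Definition~\ref{proper}, the explicit inverse construction, and the properness verification of the glued graph), but the underlying decomposition is identical.
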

\begin{proof}
For every $1\leq i\leq t$ we choose an arbitrary vertex $a_i$ from $A_i$. We first determine $\eta(a_1,a_2)$, which can be chosen from the set $\{1,\dots,d-1\}$. 
	
	Since $a_2$ and $ a_3$ belong to different sets, we have $\eta(a_1,a_3)\neq \eta(a_1,a_2)$, which restricts $\eta(a_1,a_3)$ into a set of $d-2$ elements. We keep this iteration until determine $\eta(a_1,a_i)$ for all $2\leq i\leq t$. For each set $A_i$, there are $\calU(r-1,|A_i|)$ distinct complete proper $(r-1,|A_i|,d)$-graphs in total. Therefore, we obtain 
		\[\#M(\{A_i\},r)=\frac{(d-1)!}{(d-t)!}\prod_{i=1}^t\calU(r-1,|A_i|).\qedhere\]
\end{proof}

For every integer $r \geq  -1$ we define the power series
\begin{equation}\label{eq:E}
E(X;r):= \sum_{k=0}^\infty \frac{\calU(r,k)}{k!}X^k.
\end{equation}

Now we estimate $\calU(r,k)$.
By Lemmas~\ref{welldefined} and \ref{construction}, we know that   $\calU(r,k)$ can be bounded above by the number of $(r,k,d)$-trees. Therefore, it is enough to estimate the number of $(r,k,d)$-trees.

We first determine the edges of the trees. 
We connect $k-1$ pairs of vertices in a $k$-vertex graph and obtain $\binom{\frac{(k-1)k}{2}}{k-1}$ distinct graphs. Clearly, every $(r,k,d)$-tree has to coincide one of these graphs. 

On the other hand, for every edge of an  $(r,k,d)$-tree, say $\overline{ab}$, we have
$$-1\leq \xi(a,b)\leq r\quad \textrm{and}\quad 0\leq \eta(a,b)\leq d-1.$$

Therefore, by Stirling's formula, we get a bound for $\calU(r, k)$ as
\begin{equation}\label{1}
\calU(r, k)\leq \binom{\frac{(k-1)k}{2}}{k-1}(r+2)^{k-1}d^{k-1}\leq \frac{((r+2)dk^2)^{k-1}}{(k-1)!}
\leq C_0((r+2)dek)^{k}
\end{equation}
for some constant $C_0>0$, where $e$ is the base of the natural logarithms. Therefore, the power series $E(X;r)$ has radius of convergence $\frac{1}{(r+2)de^2}$ for every $r\geq -1$.

Combining \eqref{est} and \eqref{1}, we obtain 
\begin{equation}\label{bound}
\Big|\calW(N, k)+(p-1)\gcd(p-1,d^N)^{k-2}- \calU (N-1, k)(p+ 1) \Big|=O\Big(((N+2)dek)^{2k}d^{2kN}\sqrt{p}\Big).
\end{equation}

\begin{notation}
	For a partition $\{A_i\}_{i=1}^t$, we define a counting function $$S(\{A_i\})=s_1!s_2!\dots s_k!,$$
	where $s_n$ represents the number of $A_i$ in $\{A_i\}$ of cardinality $n$, i.e. $$s_n=\#\{1\leq i\leq t\;|\; |A_i|=n \}.$$
\end{notation}

We define the following equivalence relation on the set $M_{k,t}$ of partitions:
$\{A_i\}\sim \{B_i\}$ if the multisets $$\{|A_i|\;|\; 1\leq i\leq t\}=\{|B_i|\;|\; 1\leq i\leq t\}.$$

\begin{lemma}
	For each $r\geq 0$, we have 
	\begin{equation}\label{E}
	E(X;r)=\frac{(E(X;r-1))^d+d-1}{d}.
	\end{equation}
\end{lemma}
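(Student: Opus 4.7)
The plan is to translate the combinatorial decomposition of complete proper $(r,k,d)$-graphs coming from Lemma~\ref{div} into an identity of exponential generating functions via the classical exponential formula.

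First, I would split $\calU(r,k)$ into two contributions: those complete proper $(r,k,d)$-graphs that are \emph{not} strict (i.e.\ no edge has $\xi=r$) and those that are strict. A non-strict complete proper $(r,k,d)$-graph is exactly a complete proper $(r-1,k,d)$-graph, so these contribute $\calU(r-1,k)$. A strict complete proper $(r,k,d)$-graph is, by Lemma~\ref{div}, a $(\{A_i\},r)$-graph for a unique partition $\{A_i\}_{i=1}^t \in M_{k,t}$ with $2\le t\le d$. Hence
$$\calU(r,k) \;=\; \calU(r-1,k) \;+\; \sum_{t=2}^{d}\sum_{\{A_i\}\in M_{k,t}} \#M(\{A_i\},r) \;=\; \calU(r-1,k) \;+\; \sum_{t=2}^{d}\frac{(d-1)!}{(d-t)!}\sum_{\{A_i\}\in M_{k,t}}\prod_{i=1}^{t}\calU(r-1,|A_i|).$$

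Next I would repackage the inner sum via the exponential formula. Writing $g(X):=E(X;r-1)-1=\sum_{n\ge 1}\frac{\calU(r-1,n)}{n!}X^n$, a standard EGF computation (grouping the $t!$ orderings of an unordered set partition) yields
$$g(X)^{t} \;=\; \sum_{k\ge t}\frac{X^k}{k!}\cdot t!\sum_{\{A_i\}\in M_{k,t}}\prod_{i=1}^{t}\calU(r-1,|A_i|).$$
Substituting this into the previous display and summing over $k$ gives
$$E(X;r) \;=\; 1+g(X)+\sum_{t=2}^{d}\frac{(d-1)!}{(d-t)!\,t!}\,g(X)^{t}.$$

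Finally, I would recognize the right side as a truncated binomial. The identity $\frac{(d-1)!}{(d-t)!\,t!}=\frac{1}{d}\binom{d}{t}$, valid for all $t$, lets us extend the sum to $t=0,\ldots,d$ after accounting for the low-order terms:
$$\frac{(1+g(X))^{d}+d-1}{d} \;=\; \frac{1}{d}\sum_{t=0}^{d}\binom{d}{t}g(X)^{t}+\frac{d-1}{d} \;=\; 1+g(X)+\sum_{t=2}^{d}\frac{1}{d}\binom{d}{t}g(X)^{t},$$
which matches the expression for $E(X;r)$ obtained above. Since $1+g(X)=E(X;r-1)$, this gives exactly \eqref{E}. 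The only non-routine step is the bookkeeping in the exponential-formula identity (ensuring the symmetry factor $t!$ is correctly inserted when passing from ordered to unordered set partitions); everything else is substitution and the binomial theorem.
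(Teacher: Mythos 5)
Your proof is correct and follows essentially the same route as the paper: both rest on Lemma~\ref{div} together with the count $\#M(\{A_i\},r)=\frac{(d-1)!}{(d-t)!}\prod_i\calU(r-1,|A_i|)$, and then convert the resulting recursion for $\calU(r,k)-\calU(r-1,k)$ into the identity \eqref{E} by expanding a $d$-th power of the generating function. The only difference is bookkeeping: you organize the expansion via the exponential formula for $g(X)^t$ (with the $t!$ symmetry factor) and the binomial theorem applied to $(1+g)^d$, whereas the paper groups partitions into equivalence classes by part-size multiset and uses the factor $k!/(S(\{A_i\})\prod|A_i|!)$ with a multinomial expansion of $E(X;r-1)^d$; your version is arguably the cleaner of the two.
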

\begin{proof}
	For every partition $\{A_i\}\in M_{k,t}$ there are $$\frac{k!}{S(\{A_i\})\prod_{i=1}^t |A_i|! }$$ equivalent partitions to $\{A_i\}$.
By Lemma~\ref{div}, we have 
\begin{multline}\label{eq:a1}
\calU(r, k)-\calU(r-1,k)=\sum_{t=2}^d \sum_{\{A_i\}\in M_{k,t}}\#M(\{A_i\},r)\\
=\sum_{t=2}^d \sum_{\{A_i\}\in M_{k,t}/ \sim} \#\{\{B_i\}\;|\;\{B_i\}\sim \{A_i\}\}\#M(\{A_i\},r)\\
=\sum_{t=2}^d \sum_{\{A_i\}\in M_{k,t}/ \sim} \frac{k!}{S(\{A_i\})\prod_{i=1}^t |A_i|! }\frac{(d-1)!}{(d-t)!}\prod_{i=1}^t\calU(r-1,|A_i|).
\end{multline}	

Expanding $(E(X;r-1))^d$ gives us \begin{equation*}
(E(X;r-1))^d=1 + \sum_{k=1}^\infty \sum_{t=1}^{d}\sum_{\{A_i\}\in M_{k,t}/ \sim} \frac{d!}{S(\{A_i\})(d-t)!}\prod_{i=1}^{t} \frac{\calU(r-1,|A_i|)}{|A_i!|}X^k.
\end{equation*}
Combined with \eqref{eq:a1}, this implies
$$(E(X;r-1))^d=1+d\sum_{k=1}^\infty \Big(\frac{\calU(r-1,k)}{k!}+\frac{\calU(r, k)-\calU(r-1, k)}{k!}\Big)X^k
=dE(X;r)-d+1,$$
and hence \eqref{E}.
\end{proof}

Since $\calU (-1, k) = 1$ for all $k\geq 0$, we have $E(X; -1) = e^X$. By induction, we have
$$E(X; r) =\sum_{m=0}^{d^{r+1}}v(r,m) e^{mX}$$
with non-negative real coefficients $v(r, m)$ summing to $1$. We then see that
$$E(X; r) =\sum_{m=0}^{d^{r+1}}v(r,m) \sum_{k=0}^{\infty} \frac{(mX)^k}{k!}.$$

We clearly have absolute convergence for small $X$, and we rearrange
to get
$$E(X; r) =\sum_{k=0}^{\infty} \Big(\sum_{m=0}^{d^{r+1}}v(r,m)m^k\Big) \frac{X^k}{k!}.$$
Hence, we have
$$\calU(r,k) =  \sum_{m=0}^{d^{r+1}}v(r,m)m^k.$$

We also see that the coefficient $v(r, 0)$ satisfies the recurrence \begin{equation}\label{v}
v(r, 0)= \frac{d-1+v(r-1, 0)^d}{d}\textrm{~for every~} r\geq 0\end{equation}
with $v(-1, 0)=0.$ We can then check that $\mu_r =1-v(r-1, 0)$ has the initial value $\mu_0 = 1$ and satisfies the recurrence \begin{equation}\label{mu}
d\mu_r = 1 -(1-\mu_{r-1})^d
\end{equation}
described in Theorem~\ref{main thm}.

\begin{proof}[Proof of Theorem~\ref{main thm}]
Consider that \begin{equation}\label{eq:a2}
\#f^{\circ N}(\Fp)=p-\#\{m\in \Fp \;|\;\rho_N(m)=0\}.
\end{equation}
Since the equation $f^{\circ N}(X) = m$ has at most $d^N$ solutions, we will always have $0 \leq  \rho_N(m) \leq  d^N$, whence
\[\frac{1}{d^N!}\prod_{j=1}^{d^N}(j-\rho_N(m))=\begin{cases}
	1 & \rho_N(m)=0;\\
	0 & \rho_N(m)\neq 0.\\
\end{cases}\]
Setting
\begin{equation}\label{3}
Q(T):=\sum_{k=0}^{d^N}C_{N,k}T^k=\frac{1}{d^N!}\prod_{j=1}^{d^N}(j-T),
\end{equation}
we then have
\begin{equation}\label{4}
\begin{split}
\sum_{k=0}^{d^N}C_{N,k}\calW(N,k)=\sum_{k=0}^{d^N}\left(C_{N,k}\sum\limits_{m\in \Fp}  \rho_r(m)^k\right)=&\sum\limits_{m\in \Fp} Q(\rho_r(m))\\=&
\#\{m\in \Fp \;|\;\rho_N(m)=0\}.
\end{split}
\end{equation}

Our plan is to substitute the approximated value of $\calW(N, k)$ given by \eqref{est}.
We first investigate the contribution from the main term $$\calU(N-1, k)(p+ 1)-(p-1)\gcd(p-1,d^N)^{k-2}.$$ This produces
\begin{align*}
&(p+1)\sum_{k=0}^{d^N}C_{N,k}\calU(N-1,k)-(p-1)\sum_{k=0}^{d^N}C_{N,k}\gcd(p-1,d^N)^{k-2}\\
=&(p+1)\sum_{k=0}^{d^N}\left(C_{N,k} \sum_{m=0}^{d^{N}}v(N-1,m)m^k\right)-\frac{p-1}{\gcd(p-1,d^N)^2}\frac{1}{d^N!}\prod_{j=1}^{d^N}(j-\gcd(p-1,d^N))\\
=&(p+1) \sum_{m=0}^{d^N}\left(v(N-1,m)\sum_{k=0}^{d^N}C_{N,k}m^k\right).
\end{align*}

The identity \eqref{3} shows that this inner sum vanishes for $1 \leq m \leq d^N$, and takes the value $1$ for $ m = 0$. Thus, the main term for \eqref{4} is just $$(p+ 1)v(N-1, 0) = (p+ 1)(1 -\mu_N),$$ producing the leading term $\mu_N\cdot p$ in \eqref{goal} when combined with \eqref{eq:a2}.

Now we handle the contribution to \eqref{4} arising from the error term in \eqref{bound}. For every $N\geq 2$ it has an upper bound 
\begin{multline*}
\sum_{k=0}^{d^N}|C_{N,k}|((N+2)dek)^{2k}d^{2kN}\sqrt{p}\leq 
\sum_{k=0}^{d^N}|C_{N,k}|(2Nded^N)^{2k}d^{2kN}\sqrt{p}
\\\leq
\frac{1}{d^N!}\prod_{j=1}^{d^N}(j+(2Nd^{1+2N}e)^2)\sqrt{p}
\leq (d^N+4N^2d^{2+4N}e^2)^{d^N}\sqrt{p}
\leq d^{d^{6N}}\sqrt{p}.
\end{multline*}


Let $q_r: =\frac{1}{\mu_r}$ for every $r\geq 0$. 
We next prove \begin{equation}\label{aa}
q_r\geq \frac{(d-1)r}{2}+1
\end{equation} for every $r\geq 0$ inductively. 

When $r=0$, the equality \eqref{aa} follows directly from $\mu_0=1$.
Assume that \eqref{aa} holds for some $r\geq 0$. Now we prove that \eqref{aa} also holds for $r+1$.

Consider the polynomial \begin{equation}\label{eq:p}
P(x):=d(x+1)^d-\left(x+1+\frac{d-1}{2}\right)\left((x+1)^d-x^d\right).
\end{equation} We know that 	for every $1\leq k\leq d-1$ the coefficient of $x^k$ in \eqref{eq:p} is equal to 
\begin{align*}
\frac{d-1}{2}\binom{d}{k}-\binom{d}{k-1}=\binom{d}{k}\left(\frac{d-1}{2}-\frac{k}{d-k+1}\right)\geq 0
\end{align*} and the constant term of $P(x)$ is equal to $\frac{d-1}{2}>0$. 

Combining them, we have
\begin{equation*}	P(x)>0\textrm{ ~for all~} x\geq 0.
\end{equation*}

From $\mu_0= 1$ and \eqref{mu},  we have
$q_r\geq 1$
and
\begin{multline}\label{eq:7}
q_{r+1}=\frac{d}{1-(1-\frac{1}{q_{r}})^d}
=\frac{dq_r^d}{q_r^d-(q_r-1)^d}\\
=q_r+\frac{d-1}{2}+\frac{dq_r^d-(q_r+\frac{d-1}{2})(q_r^d-(q_r-1)^d)}{q_r^d-(q_r-1)^d}
\\=q_r+\frac{d-1}{2}+\frac{P(q_r-1)}{q_r^d-(q_r-1)^d}
\geq q_r+\frac{d-1}{2}.
\end{multline}

On the other hand, from $q_{r}\xrightarrow{r\to \infty}\infty$, we have $$\frac{dq_r^d-(q_r+\frac{d-1}{2})(q_r^d-(q_r-1)^d)}{q_r^d-(q_r-1)^d}<\frac{C_1}{q_r}\leq \frac{2C_1}{r(d-1)}$$
for some constant $C_1>0$.

Combined with \eqref{eq:7}, this implies \begin{equation*}
q_r\leq d+\frac{(r-1)(d-1)}{2}+\sum_{i=1}^{r-1}\frac{2C_1}{i(d-1)}
\leq d+\frac{(r-1)(d-1)}{2}+\frac{2C_1}{(d-1)} (1+\log(r-1)).
\end{equation*}
Therefore, we obtain $q_r\sim \frac{(d-1)r}{2}$, which completes the proof.
\end{proof}
\begin{proof}[Proof of Corollary~\ref{cor1}]
We will prove that there exists an integer $P$ (independent to $A$ and $C$) such that 
this corollary holds for all primes $p\geq P$. 
	For primes $p< P$, we can take $D_d$ large enough so that 	$$D_{d}\frac{p}{\log\log p} > p,$$ 
	then the statement is trivial. Therefore, it suffices to consider sufficiently large $p$'s.

	Taking $$N:=\left\lfloor\frac{\frac{\log\log p+\log \frac{1}{3}}{\log d}-1}{6}\right\rfloor$$ in \eqref{goal}, we have 
	$$6N\log d+\log \log d<(6N+1)\log d<\log\log p+\log\frac{1}{3},$$
	and hence $d^{d^{6N}}<p^{1/3}$. Then 
	 the error term $$O(d^{d^{6N}}\sqrt{p})= O\left(p^{5/6}\right)\ll \frac{p}{N}.$$
	Combined with Theorem~\ref{main thm}, this implies that one of the following cases has to happen:
	\begin{enumerate}
		\item  $f^{\circ i}(0) = f^{\circ j}(0)$
		for some $0\leq i < j \leq N\ll \frac{p}{N}$.
			\item $\#f^{\circ N}(\Fp) \leq \frac{2p}{(d-1)N}+\frac{p}{N}.$ 
	\end{enumerate}
	
	For the case (1), the corollary is trivial. Now we assume that $f$ satisfies (2). We put $k: = \left\lceil\frac{2p}{(d-1)N}+\frac{p}{N}\right\rceil$+1. Since $f^{\circ N}(0),$ $f^{\circ (N+1)}(0), \dots $, $f^{\circ (N+k)}(0)$ all belong to $f^{\circ N}(\Fp)$ and $f^{\circ N}(\Fp)$ has at most $k-1$ element, there exist distinct $i$, $j$ in $\{N+1,\dots,N+k\}$ such that $f^{\circ i}(0)=f^{\circ j}(0),$ which finishes the proof.
\end{proof}
\begin{proof}[Proof of Corollary~\ref{cor2}]
	By choosing $p_{\widetilde A, \widetilde C, d} >\widetilde A,$ we may assume that $p\;\nmid\; \widetilde A.$
%

With the assumption that $\widetilde{A},\widetilde{C}>0$, we know that the sequence
	$\widetilde{f}^0(0), \widetilde{f}^1(0), \widetilde{f}^2(0), \dots$ is strictly increasing with $\widetilde{f}^j(0) \leq (\widetilde A+\widetilde C)^{d^j-1}.$ Thus,
	if $p \geq (\widetilde A + \widetilde C)^{d^N}$, we cannot have $p \;|\; (\widetilde{f}^j(0) -\widetilde{f}^i(0))$ with $0 \leq i < j \leq N$. The
	assumptions in Theorem~\ref{main thm} will therefore hold when
	\begin{equation}\label{i}
	N \leq
	\frac{\log \log p}{\log d}-	\frac{\log \log (\widetilde{A}+\widetilde{C})}{\log d}.
	\end{equation}

We set \begin{equation}\label{v1}
N_0:=\lfloor\log\log p/(7\log {d})\rfloor+1.
\end{equation}
For $p$ large enough (relative to $\widetilde A, \widetilde C$ and $d$), $N_0$ satisfies \eqref{i}. 
By Theorem~\ref{main thm}, for $p$ large enough we have \begin{equation}\label{v2}
\#\widetilde f_p^{\circ N_0}(\Fp) <\left(\frac{2}{d-1}+1\right)p/N_0\leq \frac{3p}{N_0}< \frac{21p\log d}{\log \log p}.
\end{equation} 
Since all cycles lie inside the set $\widetilde f_p^{\circ N_0}(\Fp)$, we prove the first assertion of the corollary. Moreover, each pre-cycles has length less than or equal to $N_0+\#\widetilde f_p^{\circ N_0}(\Fp)$. Combining with \eqref{v1} and \eqref{v2},  for $p$ large enough we have
$$N_0+\#\widetilde f_p^{\circ N_0}(\Fp)< \frac{4p}{N_0}<\frac{28p\log d}{\log \log p},$$
which completes the proof of the second assertion.
\end{proof}

\end{document}